\definecolor{indigo}{rgb}{0.29, 0.0, 0.51}  
\theoremstyle{plain}
\newtheorem{theorem}{Theorem}
\newtheorem{proposition}[theorem]{Proposition}
\newtheorem{conjecture}[theorem]{Conjecture}
\newtheorem{lemma}[theorem]{Lemma}
\newtheorem{question}[theorem]{Question}
\theoremstyle{definition}
\theoremstyle{remark}
\newtheorem{remark}[theorem]{Remark}
\numberwithin{theorem}{section}
\newcommand{\dfn}[1]{{\em #1}}        
\newcommand{\R}{\mathbb{R}}           
\newcommand{\Q}{\mathbb{Q}}           
\newcommand{\Z}{\mathbb{Z}}           
\newcommand*\bigcdot{\mathpalette\bigcdot@{0.6}}
\newcommand*\bigcdot@[2]{\mathbin{\vcenter{\hbox{\scalebox{#2}{$\m@th#1\bullet$}}}}}
\DeclareMathOperator\tb{tb}                   
\DeclareMathOperator\tbb{\overline {\tb}}     
\DeclareMathOperator\rot{rot}                 
\begin{document}

\title{On contact cosmetic surgery} 

\author{John B. Etnyre}
\address{Department of Mathematics \\ Georgia Institute of Technology \\ Atlanta \\ Georgia}
\email{etnyre@math.gatech.edu}

\author{Tanushree Shah}
\address{Department of Mathematics \\ Chennai Mathematical Institute \\ India}
\email{tanushrees@cmi.ac.in}

\subjclass[2020]{57R17}

\begin{abstract}
		We demonstrate that the contact cosmetic surgery conjecture holds for all non-trivial Legendrian knots, with the possible exception of Lagrangian slice knots. We also discuss the contact cosmetic surgeries on Legendrian unknots and make the surprising observation that some Legendrian unknots have a contact surgery with no cosmetic pair, while all other contact surgeries are contactomorphic to infinitely many other contact surgeries on the unknot.  
\end{abstract}

\maketitle

\section{Introduction}
In this note we establish that a contact analog of the cosmetic surgery conjecture holds for all Legendrian knots except possibly for a small family of Legendrian knots. 

We first recall the smooth cosmetic surgery conjecture. Given a knot $K$ in $S^3$, we say that two surgeries on $K$, say $S^3_K(r)$ and $S^3_K(r')$, are \dfn{cosmetic} if $S^3_K(r)$ and $S^3_K(r')$ are diffeomorphic and \dfn{truly cosmetic} if $S^3_K(r)$ and $S^3_K(r')$ are orientation preserving diffeomorphic. There are many examples of cosmetic surgeries on knots, but the only known truly cosmetic surgeries are on the unknot.  Thus the cosmetic surgery conjecture postulates that non-trivial knots admit no truly cosmetic surgeries (see \cite{Gordon1991} Conjecture 6.1). There has been a great deal of work on this conjecture \cite{DaemiEismeierLidman24pre, Hanselman2023,NiWu2015, OzsvathSzabo2011, Tao2019Pre, Wang2006, Wu2011}, and this paper heavily depends on that work. We recall the specific results we will need in Section~\ref{background}.  

Turning to contact geometry, we recall that given a Legendrian knot $L$ in a contact manifold $(M,\xi)$ contact $(r)$-surgery on $L$ is the result of removing a standard neighborhood of $L$ from $M$ and replacing it with a solid torus whose meridian has slope $r+\tb(L)$ and extending the contact structure over this torus to be any tight contact structure. See Section~\ref{background} for more details, but we note now that for any $r\not=1/n$ there will be more than one possibility for a contact surgery. 

We can now say that two contact surgeries on a Legendrian knot with different smooth surgery coefficients are \dfn{cosmetic} if there is a contactomorphism between the resulting manifolds. We note that since a contactomorphism between two contact manifolds is automatically orientation preserving, we do not need to distinguish between ``cosmetic" and ``truly cosmetic" as in the smooth case. 
Generalizing the cosmetic surgery conjecture to the contact category, we have the contact cosmetic surgery conjecture.
\begin{conjecture}[Contact cosmetic surgery conjecture]
Any Legendrian knot in $S^3$ with its standard tight contact structure that is not smoothly an unknot admits no cosmetic contact surgeries. 
\end{conjecture}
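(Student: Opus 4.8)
The plan is to reduce the contact problem to the smooth one and then to separate the resulting contact structures using homotopy-theoretic and, where necessary, Floer-theoretic invariants. Suppose $L$ is a Legendrian knot whose underlying smooth type $K$ is not the unknot, and suppose two genuinely distinct contact surgeries on $L$ are contactomorphic; a cosmetic pair means either distinct contact coefficients $r\neq r'$, or a common coefficient $r=r'$ (possible only for $r\neq 1/n$) with two different tight extensions. Write $(M,\xi)$ and $(M',\xi')$ for the two contact manifolds. As noted in the excerpt, every contactomorphism is orientation preserving, so $M$ and $M'$ are orientation-preserving diffeomorphic; and since the excerpt records that contact $r$-surgery on $L$ glues in a solid torus of meridian slope $r+\tb(L)$, smoothly these are $S^3_K(r+\tb(L))$ and $S^3_K(r'+\tb(L))$. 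First I would invoke the smooth cosmetic surgery results recalled in Section~\ref{background}: because $K$ is non-trivial, a truly cosmetic pair of smooth surgeries is possible only if the two smooth slopes are opposite, $(r+\tb(L))=-(r'+\tb(L))$, equivalently $r+r'=-2\,\tb(L)$, subject to the accompanying arithmetic, genus, Casson--Walker, and Heegaard-Floer $d$-invariant constraints. The common-slope case $r=r'$ (equal underlying manifolds, distinct tight extensions) must then be handled separately.

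For both surviving situations — opposite slopes $r\neq r'$, and the common-slope case with distinct extensions — I would try to separate $\xi$ from $\xi'$ using invariants that any contactomorphism must preserve. The coarsest is the homotopy class of the underlying plane field, detected by the induced $\spinc$ structure (equivalently $c_1$) together with Gompf's three-dimensional invariant $d_3$. Presenting each contact surgery as a sequence of contact $(\pm1)$-surgeries on a Legendrian link and applying the Ding--Geiges--Stipsicz formula expresses $d_3(\xi)$ explicitly in terms of $\tb(L)$, $\rot(L)$, the rotation numbers of the stabilizations recording the extension, and the signature and Euler characteristic of the associated $4$-dimensional handlebody. I expect the dependence of this formula on the rotation-number data and on the sign of the smooth slope to force $c_1(\xi)\neq c_1(\xi')$ or $d_3(\xi)\neq d_3(\xi')$ whenever the two surgeries differ, provided $\tb(L)$ and $\rot(L)$ are not simultaneously extremal. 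This already disposes of the common-slope case, where the structures share an underlying manifold but record inequivalent extension data, and it disposes of the opposite-slope case away from the extremal locus.

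The hard part will be exactly the locus where these invariants become symmetric, namely the Lagrangian slice knots, for which $\tb(L)=-1$ and $\rot(L)=0$ and $K$ bounds a Lagrangian (hence smooth) disk, so that $g_4(K)=0$ and $\tau(K)=0$. Here the slice--Bennequin bound is sharp, the rotation contribution of $L$ itself to the $d_3$ formula vanishes, so the candidate structures for the opposite slopes $r$ and $r'=2-r$ can have matching homotopy invariants; worse, because such $K$ are indistinguishable from the unknot by precisely the concordance and $d$-invariant data driving the smooth constraints of the first step, the smooth reduction may fail to eliminate the opposite-slope pair at all. To close this case I would bring in finer invariants: the Ozsv\'ath--Szab\'o contact class $c(\xi)\in\HFhat(-M)$ and its behaviour under the surgery exact triangle, the full knot Floer homology $\HFKhat(K)$ controlling the $d$-invariants and $L$-space slopes of the surgeries, and the Casson--Walker invariant as a secondary numerical obstruction. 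The hope is that the contact class, being sensitive to the chosen tight extension and to more than the plane-field homotopy class, distinguishes the two fillings even when $\spinc$ and $d_3$ agree; establishing this uniformly over all non-trivial Lagrangian slice knots is the crux of the argument and is exactly what resists the classical-invariant methods that settle every other Legendrian knot.
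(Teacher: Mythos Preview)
The statement you are attempting to prove is a \emph{conjecture}, and the paper does not prove it. What the paper establishes is the partial result Theorem~\ref{contactcosmeticthm}: the conjecture holds for all Legendrian knots except possibly for $\pm 2$ surgery on certain Lagrangian slice knots. Your proposal tracks the paper's strategy closely for that partial result --- reduce via the smooth cosmetic surgery constraints (Hanselman's Theorem~\ref{hanselman}, Ni--Wu's Theorem~\ref{tauresult}, the Plamenevskaya bound Theorem~\ref{taubound}) to a short list of candidate slope pairs, then distinguish the resulting contact structures by computing $d_3$ from an explicit contact $(\pm1)$-surgery presentation --- and that portion is essentially what the paper does in Propositions~\ref{p1}--\ref{p3}.

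The genuine gap is your final paragraph. You correctly identify the Lagrangian slice locus as the obstruction, but what you offer there is not an argument: it is a list of invariants (the contact class $c(\xi)$, $\HFKhat(K)$, the Casson--Walker invariant) accompanied by the phrase ``the hope is that\ldots''. None of these is shown to separate the two contact structures arising from $\pm 2$ surgery on a Lagrangian slice Legendrian, and there is no reason to expect a uniform argument: for the $\tb=-1$, $\rot=0$ unknot these two surgeries genuinely \emph{are} contactomorphic (both give the tight $L(2,1)$), so any invariant you invoke must somehow detect the difference between the unknot and a non-trivial Lagrangian slice knot, which is exactly the subtlety that defeats the concordance and $d$-invariant inputs you have available. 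The paper does not close this gap either; it explicitly records the $\pm 2$ Lagrangian slice case as the remaining open instance of the conjecture. Your proposal therefore cannot be a proof of the conjecture, because no such proof presently exists.
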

We note that the contact cosmetic surgery conjecture is a natural generalization of the smooth cosmetic surgery conjecture and is particularly interesting in light of trying to understand different contact surgery representations of different contact manifolds. In addition, our main theorem gives a small class of knots on which the conjecture might be false, and thus gives prime candidates to consider for the general cosmetic surgery conjecture too. 

Our main result is a proof of this conjecture for almost all Legendrian knots. 
\begin{theorem}\label{contactcosmeticthm}
The contact cosmetic surgery conjecture holds true for all non-trivial Legendrian knots except possibly for $\pm 2$ surgery on a Legendrian knot $L$ that is Lagrangian slice and is in a knot type $K$ with $\tau(K)=0, \tbb(K)=-1$, has Seifert genus $2$, and is quasi-positive. 
\end{theorem}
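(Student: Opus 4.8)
The plan is to transfer the question to smooth topology through the underlying diffeomorphism, apply the partial resolutions of the smooth cosmetic surgery conjecture recalled in Section~\ref{background}, and then use contact rigidity --- Stein fillability of negative contact surgeries, Legendrian and transverse invariant bounds, and Gompf's $d_3$ formula --- to eliminate the cases the smooth results leave open. So suppose $L$ has knot type $K$, is not an unknot, and contact $(r_1)$-surgery and contact $(r_2)$-surgery on $L$ produce contactomorphic manifolds. Since a contactomorphism is orientation preserving, $S^3_K(r_1+\tb(L))$ and $S^3_K(r_2+\tb(L))$ are orientation-preserving diffeomorphic. If $r_1\ne r_2$ this is a genuine cosmetic surgery pair on the nontrivial knot $K$, and the results of Section~\ref{background} force the two smooth slopes to be $p/q$ and $-p/q$ for a single fraction in lowest terms (so $r_1+r_2=-2\tb(L)$), subject to the linking-form restrictions on $p$ and $q$; moreover $K$ is prime, is neither a torus knot nor a nontrivial cable, and either $\{p/q,-p/q\}=\{2,-2\}$ or $K$ has Seifert genus $2$. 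If $r_1=r_2=:r$, then necessarily $r\ne 1/n$, the two surgeries differ only in the tight extension over the glued-in solid torus, and I treat this case separately below.

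Assume first $r_1\ne r_2$ and argue by the signs of the coefficients. If one of them, say $r_2$, is negative, then contact $(r_2)$-surgery on $L$ is Legendrian surgery on a Legendrian link obtained from $L$ by stabilizations, hence Stein fillable, and so, through the contactomorphism, is the other surgery. The symmetry of the Heegaard Floer correction terms forced by the cosmetic hypothesis, together with the one-sided bound that fillability places on those terms, constrains the invariants $V_i(K)$ enough to force $\tau(K)=0$, so that $K$ is slice; the slice--Bennequin inequality then gives $\tb(L)\le -1$, while fillability of the contact $(r_1)$-surgery forces the transverse pushoff of $L$ to have nonvanishing invariant, hence $\tb(L)+|\rot(L)|\le 2\tau(K)-1=-1$, so $\tb(L)=-1$ and $\rot(L)=0$; a closer analysis of the Stein filling of the $r_2$-surgery then exhibits $L$ as Lagrangian slice, whence $\tbb(K)=-1$ and $K$ is quasipositive. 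If instead both $r_i>0$, the surgeries are built from a contact $(+1)$-surgery on $L$, and either the transverse pushoff of $L$ has vanishing invariant --- in which case both contact structures are overtwisted, Eliashberg's classification reduces the question to their $d_3$-invariants, and a computation with Gompf's formula together with the smooth restrictions on the slopes shows these cannot agree --- or the transverse invariant survives, which again bounds $\tb(L)$ and returns us to the fillable analysis. Collecting the surviving constraints leaves precisely the family named in the statement.

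For the case $r_1=r_2=r\ne 1/n$ I would present both surgeries as contact $(\pm 1)$-surgeries on Legendrian links obtained by stabilizing copies of $L$, with one $(+1)$-framed Legendrian unknot adjoined when $r>0$, so that the two surgeries differ only in the stabilization pattern, and then show the resulting contact structures are not contactomorphic. One first compares the induced $\spinc$ structures and the $d_3$-invariant, which separates most pairs; since $K$ is nontrivial the surgered manifold is not one of the few $3$-manifolds carrying a unique tight contact structure, so where $d_3$ fails one appeals to the Heegaard Floer contact invariant and the classification of tight contact structures on the surgered manifold to conclude.

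The main obstacle is the Seifert-genus-$2$ case of $r_1\ne r_2$: there the smooth results do not exclude a cosmetic pair, so the whole argument has to be run on the contact side, and the delicate point is pinning down which genus-$2$ knots admitting a Lagrangian slice Legendrian representative can survive. Making strong fillability, $\tau(K)=0$, quasipositivity, and primeness (the last needed to block connected-sum reductions) simultaneously effective is the crux, and the knots this argument cannot eliminate are exactly those in the statement.
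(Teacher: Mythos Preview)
Your outline has the right opening move---reduce via Hanselman and Ni--Wu to smooth slopes $\pm 2$ or $\pm 1/n$ on a knot with $\tau(K)=0$---but the organizing principle you chose (splitting on the signs of the contact coefficients $r_1,r_2$) does not carry the weight you put on it. In your ``one negative'' branch, the chain of implications is broken: Plamenevskaya's inequality $\tb(L)+|\rot(L)|\le 2\tau(K)-1$ holds for every Legendrian in $(S^3,\xi_{std})$ with no transverse-invariant input needed, and combined with $\tau(K)=0$ it gives only $\tb(L)\le -1$, not $\tb(L)=-1$. Nothing you wrote pins down $\tb(L)$. Worse, for $\tb(L)=-k$ with $k\ge 2$ the contact coefficients corresponding to smooth slopes $\pm 2$ or $\pm 1/n$ are both positive, so your ``one negative'' branch never even occurs for those knots---yet the conjecture must still be verified there. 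The claim that fillability of a positive contact surgery forces the transverse invariant of $L$ in $S^3$ to survive is also unjustified; the known implications run the other way.

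In your ``both positive'' branch and in the $r_1=r_2$ case you invoke $d_3$ computations and ``the classification of tight contact structures on the surgered manifold'' without supplying either. There is no such classification for surgery on a general nontrivial knot, and the $d_3$ computations are not a detail but the entire substance of the proof. The paper organizes instead by the value of $\tb(L)$ (separately $-1$, $-2$, and $\le -3$), uses $\tau(K)=0$ to restrict $\rot(L)$, writes explicit contact $(\pm 1)$-surgery diagrams for each relevant smooth slope, and computes the intersection forms, signatures, and $c_1^2$ to show the $d_3$-invariants of the two sides never coincide. The sole case where $d_3$ fails to separate is $\tb(L)=-1$ with smooth slopes $\pm 2$; there the extra constraints (Lagrangian slice, quasipositive, $g_4(K)=0$) come from Mark--Tosun's fillability criteria for positive contact surgery, not from the correction-term symmetry you sketch.
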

This theorem will be proven by constructing specific contact $(\pm 1)$-surgery diagrams for specific contact surgeries and then analyzing the $d_3$-invariant of the corresponding diagrams. See Section~\ref{obstruction} for the details and the Appendix for details on the linear algebra necessary for the computations. 

We note that since there are many possible contact $(r)$-surgeries on a Legendrian knot $L$ if $r\not=1/n$, there are two alternate versions of the contact surgery conjecture. We say that two different contact $(r)$-surgeries on a Legendrian knot $L$ are \dfn{weakly-cosmetic} if they are contactomorphic, and we say they are \dfn{strongly-cosmetic} if they are isotopic. We note that since the smooth surgery coefficient is the same for both contact surgeries, the manifolds obtained from surgery are canonically identified, so we can talk about isotopy of the contact structure. 
We do not conjecture an analog of the contact cosmetic surgery conjecture but do ask the following. 
\begin{question}
Which Legendrian knots in $S^3$ with its standard tight contact structure admit strongly-cosmetic or weakly-cosmetic surgeries?
\end{question}
We make a few simple observations about this question. 
\begin{proposition}\label{weak}
The following holds for a Legendrian knot in a non-trivial knot type.
\begin{enumerate}
\item There are no strongly-cosmetic surgeries on $L$ with smooth surgery coefficient $r<\tb(L)$. 
\item If contact $(+1/n)$-surgery for some $n>1$, respectively $n=1$, on $L$ has a non-vanishing contact invariant, then there are no strongly-cosmetic surgeries on $L$ for contact $(r)$-surgeries with $r>0$, respectively $r\geq 1$.
\item If $L$ is an L-space knot with $\tb(L)=2\tau(L)-1$, then there are no strongly-cosmetic surgeries on $L$. 
\item For smooth surgery coefficient $-2$ a Legendrian knot $L$ has no strongly-cosmetic surgeries, except possibly when $\rot(L)=0$, and has no weakly-cosmetic surgeries if $\tb(L)\leq -2$ except possibly when $\rot(L)=0$. 
\end{enumerate}
\end{proposition}
We note that Item~(3) in the theorem holds for many Legendrian knots. For example, all maximal Thurston-Bennequin invariant positive torus knots satisfy the condition. 
\begin{remark}
In contrast to Item~(4) above, we see that for some Legendrian knots, there are many weakly-cosmetic surgeries. Consider $\R^3$ with its standard contact structure $\xi_{std}=\ker(dz-y\, dx)$. The ``mirror map'' $m: \R^3\to \R^3:(x,y,z)\mapsto (x,-y,-z)$ is an orientation-preserving diffeomorphism that is smoothly isotopic to the identity but changes the coorientation on $\xi_{std}$. If $L$ is a Legendrian knot in $(\R^3,\xi_{std})$ such that $m(L)$ is Legendrian isotopic to $L$, then there is a contactomorphism of the complement of a neighborhood, denoted $S^3_L$, to itself that reverses the coorientation on the contact structure. If $\xi$ is a tight contact structure on a solid torus $S^1\times D^2$ that can be glued into $S^3_L$ to affect contact surgery on $L$, then we can also mirror $(S^1\times D^2, \xi)$ by sending $(\phi, x,y)$ to $(-\phi, x, -y)$ to get $(S^1\times D^2, \xi')$. The relative Euler class of the two contact structures have opposite sign, so if the relative Euler class of $\xi$ is not zero, we can use $\xi$ and $\xi'$ to perform two different contact surgeries on $L$ by gluing them to $S^3_L$. Now, using the mirror map $m$ on $S^3_L$ and the mirror on the solid torus, we can see that these two different contact surgeries give contactomorphic contact structures. 

There are many examples of knots $L$ satisfying the above condition, such as any positive Legendrian torus knot with $\rot=0$. We would like to thank the referee for pointing out this example. 
\end{remark}

Just as in the smooth setting, it is important to exclude Legendrian unknots from this conjecture, as they do admit cosmetic surgeries. We can explicitly write out all the contact cosmetic surgeries on Legendrian unknots. 
To state these results, we first recall the situation for smooth unknots. Given any non-zero rational surgery on an unknot, one may use Rolfsen twists to find a diffeomorphic manifold described by a rational surgery on the unknot with a surgery coefficient less than or equal to $-1$. So to describe the cosmetic surgeries on the unknot, we start with a rational number $-p/q\leq -1$. Now define 
\[
CS(-p/q)=\{ -p/q' | q'=q+np \text{ or } q'=\overline q +np \text{ for some } n\in\Z\}
\]
where $\overline q$ is the inverse of $q$ mod $p$ if it exists (otherwise, we ignore the second possibility for $q'$). Since any surgery on the unknot yields a lens space and we know when two lens spaces are diffeomorphic, it is easy to see that given $-p/q\leq -1$, then $r$ surgery on the unknot is orientation preserving diffeomorphic to $-p/q$ surgery on the unknot if and only if $r\in CS(-p/q)$. 
\begin{theorem}\label{unknotsurg}
For a Legendrian unknot $L$ with $|\rot(L)|<|\tb(L)+1|$ contact $(r-\tb(L))$-surgery on $L$ is not equivalent to any other contact surgery on $L$ if $r<\tb(L)$. For any other Legendrian unknot or non-zero contact surgery corresponding to a non-zero smooth surgery, there are infinitely many other contact surgeries on the unknot yielding contactomorphic manifolds. 

Specifically, let L be a Legendrian unknot with $|\rot(L)|= |\tb(L) + 1|$, and
let $-p/q <-1$. 
Suppose $r,r' \in CS(-p/q)$, with $r,r'$ either both
in the interval $(\tb(L),0)$ both less than $\tb(L)$, or both greater than $0$.
Then any contact $(r-\tb(L))$-surgery on L is contactomorphic to
some $(r'-tb(L))$ surgery on $L$. Moreover, any contact $(r-\tb(L))$-surgery on $L$ for $r<\tb(L)$ is equivalent to some contact $(r'-\tb(L))$-surgery for $r'>0$. 



Let $L$ be a Legendrian unknot with $|\rot(L)|<|\tb(L)+1|$ and $r, r'\in CS(-p/q)$ be greater than $\tb(L)$. Then any contact $(r-\tb(L))$-surgery on $L$ is contactomorphic to a contact $(r'-\tb(L))$-surgery on $L$.
\end{theorem}
\begin{remark}
We note the interesting phenomenon that any smooth surgery on the unknot (other than $0$ surgery) is diffeomorphic to infinitely many other surgeries on the unknot, but for Legendrian knots, there are some that have unique contact surgeries, though most have infinitely many cosmetic contact surgeries. 
\end{remark}

\begin{remark}
We will see in the proof of this theorem that for $r\in (0,1)$, there can be many different contact $(r-\tb(L))$-surgeries on a Legendrian unknot $L$ which give the same contact manifold. Thus, we see that there are contact surgeries on unknots that are strongly-cosmetic. 
\end{remark}

\begin{remark}
You can think of this theorem as giving a version of a contact Rolfsen twist. Specifically for the maximal Thurston-Bennequin unknot one may always perform positive and negative Rolfsen twists as long as both contact structures are tight. If the Thurston-Bennequin of the Legendrian unknot is not maximal, then one may still perform Rolfsen twist if the rotation number is extremal, but only negative Rolfsen twists are guaranteed, while positive ones might or might not be possible. 

We will give a diagrammatic interpretation of part of the theorem in Section~\ref{unknot} (the parts that involve tight contact structures) and see the full interpretation of contact Rolfsen twist in this context. 
\end{remark}

This theorem will be proven using a careful analysis of contact surgery and the classification of tight contact structures on various simple $3$-manifolds. The details can be found in Section~\ref{unknot}. 

\begin{remark}
Previously, Chatterjee and Kegel \cite{ChatterjeeKegel24pre} had studied surgeries on Legendrian unknots in the tight contact structure on $S^3$. Specifically, in Section~3 of their paper, they characterized which contact surgeries were tight and which were overtwisted (but did not specifically identify the contact structures obtained), then in Theorem~4.1 they did identify the overtwisted contact structures obtained by some contact surgery corresponding to $-(4m+3)/4$ surgeries on Legendrian unknots. Their proofs involved contact surgery diagrams and computing $d_3$-invariants of the specific surgeries considered. While it is possible that similar techniques could be used to prove our theorem above, it is not clear how this could be done, given that the methods we have for computing $d_3$-invariants for arbitrary contact surgeries on the unknot do not have simple closed-form expressions and can get quite complicated for surgery coefficients with long continued fraction expansions. So we will base our proof on the Farey graph and simple classification results on tori and lens spaces. 
\end{remark}

\subsection*{Acknowledgements}
The authors thank B\"ulent Tosun for helpful conversations about his work with Tom Mark. We also thank the referee for many valuable comments that improved the paper, especially for suggesting the exploration of the different types of contact cosmetic surgeries and the diagrammatic approach to the contact Rolfsen twist. The first author was partially supported by National Science Foundation grant DMS-2203312 and the Georgia Institute of Technology Elaine M. Hubbard Distinguished Faculty Award. Some of this work was completed while the second author was an ESI Fellow in Vienna, Austria. Currently, the second author receives partial support from the Infosys Fellowship.



\section{Background and preliminary result}\label{background}

We assume the reader is familiar with contact geometry and convex surface theory as discussed in \cite{Honda00a}. However, we recall some facts about the classification of contact structures in Section~\ref{class} for the convenience of the reader; more details on this discussion and the relevant notation can be found in \cite{EtnyreRoy21}. We then recall some facts about contact surgery on Legendrian knots in Section~\ref{surgsec}, while Section~\ref{d3set} discusses the $d_3$-invariant of plane fields. We end this section by recalling several results about the smooth cosmetic surgery conjecture. 

We also note here that we are using conventions for the Farey graph from \cite{EtnyreRoy21}. In particular, $0$ is at the point $(0,1)$ and $\infty$ is at the point $(0,-1)$ on the unit disk, and the positive rational numbers have positive $x$-coordinate. This differs from some early references in contact geometry, but fits better with topological slope conventions.

\subsection{Contact structures on thickened tori, solid tori, and lens spaces}\label{class}

A contact structure $\xi$ on $T^2\times [0,1]$ with convex boundary, where each boundary component has two dividing curves with slope $s_i$ on $T^2\times\{i\}$ is \dfn{minimally twisting} if any convex torus parallel to the boundary has dividing slope clockwise of $s_0$ and anti-clockwise of $s_1$ (we denote this by saying its slope is in $[s_0,s_1]$). From \cite{Giroux00, Honda00a} we know that any minimally twisting contact structure on $T^2\times [0,1]$ with boundary conditions as above is determined by a minimal path in the Farey graph from $s_0$ clockwise to $s_1$ with signs on each edge in the path. 

To discuss contact structures on solid tori, we set up some notation. Consider $T^2\times [0,1]$. If we foliate $T^2\times\{0\}$ by linear curves of slope $r$ and let $S_r$ be the result of collapsing the leaves of this foliation, then we call $S_r$ a \dfn{solid torus with lower meridian $r$}. Similarly, we can define the \dfn{solid torus $S^r$ with upper meridian $r$} by collapsing the same curves on $T^2\times\{1\}$. We note that the standard way of thinking of the solid torus as $S^1\times D^2$ is $S_\infty$ in this notation. Moreover, if one performs $r$ Dehn surgery on a knot $K$ in a manifold $M$ then this is equivalent to removing a standard neighborhood of $K$ and replacing it with $S_r$. 

From \cite{Giroux00, Honda00a}, we know that any tight contact structure on $S_r$ with convex boundary having dividing slope $s$ is determined by a minimal path in the Farey graph from $r$ clockwise to $s$ with a sign on each edge except the first. (We have a similar description for $S^r$ with dividing slope $s$ except the path runs from $r$ anti-clockwise to $s$.) Moreover, any such path gives a tight contact structure on $S_r$. 

Moreover, as shown in \cite{Honda00a}, if we glue two contact structures on thickened tori determined by minimal signed paths in the Farey graph, the result will be tight if when shortening the concatenated paths to a shortest path, the signs of any two shortened edges are the same. The same holds when gluing a thickened torus to a solid torus, except that the contact structure remains tight when shortening the path adjacent to the unlabeled edge.

Now one can describe a lens space by taking $T^2\times [0,1]$ and collapsing a linear foliation of slope $s$ on $T\times \{0\}$ and of slope $r$ on $T^2\times\{1\}$, denote this by $L_s^r$. The standard lens space $L(p,q)$ is $L^0_{-p/q}$ where $-p/q<-1$. Tight contact structures on $L_s^r$ are determined by a minimal path in the Farey graph from $s$ clockwise to $r$ with signs on all the edges except the first and the last, \cite{Giroux00, Honda00a}.

\subsection{Contact surgery}\label{surgsec}
Given a Legendrian knot $L$ in a contact manifold $(M,\xi)$ it has a standard neighborhood $N_L$. The boundary of $N_L$ is convex with dividing curves of slope $\tb(L)$. If $s$ is any rational number not equal to $0$, then contact $(s)$-surgery is defined to be the result of removing $N_L$ from $M$ and replacing it with a tight contact structure on the solid torus $S_{s+\tb(L)}$ with lower meridional $s+\tb(L)$. We know from the previous section that this contact structure is defined by a minimal path in the Farey graph from $s+\tb(L)$ clockwise to $\tb(L)$ with signs on all but the first edge in the path. So ``contact surgery" is not uniquely defined unless there is an edge from $s$ to $\tb(L)$, and this will occur when doing contact $(\pm 1/n)$-surgery on $L$. 

We note that above we did not define contact $(0)$-surgery. This is because contact surgery should remove a neighborhood of a Legendrian knot and replace it with another solid torus supporting a tight contact structure. But for ``contact $(0)$-surgery'', we must glue in an overtwisted contact torus since a Legendrian divide on the boundary of the torus will bound a disk and thus the contact structure must be overtwisted. If one allows overtwisted contact structures on the surgery torus, then there are always infinitely many contact surgeries, and the obviously overtwisted ones are understood by the classification of overtwisted contact structures. Thus, in this paper, we do not consider contact $(0)$-surgery to be well-defined and only consider contact surgeries with other coefficients. 

There is an algorithm for turning contact $(s)$-surgery on $L$ into a sequence of contact $(\pm 1)$-surgeries on some link $L'$ obtained from $L$. This was first described in \cite{DingGeigesStipsicz04}. We describe a modified version of this here (see \cite{Schoenenberger05} for the connection). When $s<0$ then let $s=[c_1,c_2,\ldots, c_n]$ be the continued fraction expansion. Let $L'$ be the link obtained from $L$ by adding a chain of $(n-1)$ unknots linked to $L$ (that is, $L_1$ is $L$, $L_2$ is a meridian to $L_1$ and $L_3$ is a meridian of $L_2$ and so on) and then stabilizing the first component $|c_1+1|$ times and $i^{th}$ component, for $i>1$, $|c_i+2|$ times. Contact $(s)$-surgery on $L$ is equivalent to contact $(-1)$-surgery on $L'$. If $r>0$ then let $p/q=s$. There is a smallest positive integer $k$ such that $p/(q-kp)$ is negative. Then let $L'$ be the Legendrian link obtained for $L$ by taking $k+1$ Legendrian push-offs of $L$ and contact $(s)$-surgery on $L$ is the same as contact $(+1)$-surgery on $k$ components of $L'$ and contact $(p/(q-kp))$-surgery on the $(k+1)^{st}$ component of $L'$. 

\subsection{The $d_3$-invariant}\label{d3set}
In \cite{Gompf98}, Gompf defined an invariant of homotopy classes of plane fields that obstructed them from being homotopic over the $3$-skeleton of a $ 3$-manifold. This invariant goes by many names (differing by multiplicative or additive constants), but we will use the following normalization. Given a plane field $\xi$ on a $3$-manifold $M$ one can find a $4$-manifold $X$ with an almost complex structure $J$ such that $\partial X=M$ and $\xi$ is the $J$-tangencies to the boundary (that is $\xi=TM\cap JTM$), see \cite{Gompf98}. We define the $d_3$-invariant of $\xi$ to be 
\[
d_3(\xi)=\frac 14(c_1^2(J)-3\sigma(X)-2(\chi(X)-1))\footnote{We note that we use $\chi(X)-1$ in the formula instead of $\chi(X)$. This is because with this definition, the invariant is additive under connected sum and has nice relations with other invariants.},
\] 
where $\sigma(X)$ is the signature of $X$, $\chi(X)$ is the Euler characteristic of $X$, and $c_1^2(J)$ is the first Chern class of $TM$ with the almost complex structure $J$. To make sense of $c_1^2(J)$ we need to assume that the Euler class (or Chern class) of $\xi$ is torsion. In that case, if one lets $Q$ be the intersection matrix of $X$, then there is a class $c\in H_2(X;\Q)$ such that $Q c$ is the Poincar\'e dual of $c_1(J)$. Then $c_1^2(J)$ is the intersection of $c$ with itself. That is, if one chooses a basis for $H_2(X;\Q)$, then $Q$ and $c$ are represented by a matrix and a vector, and $c_1^2(J)=c^T Q c$ where $c^T$ is the transpose of $c$. 

Given a Legendrian link $L$ with components $L_1\cup\cdots \cup  L_k$ and the contact structure $\xi$ is obtained from the standard contact structure on $S^3$ by contact $(+1)$-surgery on the $L_i$ for $i=1\ldots, l$ and contact $(-1)$-surgery on the $L_i$ for $i=l+1,\ldots, k$, then the $d_3$ invariant can be computed by 
\[
d_3(\xi)=\frac14 (c^2-3\sigma(X)-2(\chi(X)-1))+l
\]
where $X$ is the $4$-manifold obtained by attaching $2$-handles to $B^4$ along the link $L$ with framing $\tb(L_i)+1$ for $i=1,\ldots, l$ and framing $\tb(L_i)-1$ for $i=l+1,\ldots, k$, and $c^2$ is computed as follows. Let $Q$ be the intersection form of $X$ and let $\mathbf{r}$ be the column vector with $i^{th}$ entry the rotation number of $L_i$. Now set $\mathbf{r}'=Q^{-1} \mathbf{r}$ and $c^2=(\mathbf{r}')^TQ{\mathbf{r}}'={\mathbf{r}'}^T\mathbf{r}$. 

\subsection{Past results on the smooth cosmetic surgery conjecture}
Recently, Hanselman used Heegaard Floer invariants to prove the cosmetic surgery conjecture for most classes of knots and surgery coefficients. 
 \begin{theorem} [Hanselman, 2023 \cite{Hanselman2023}]\label{hanselman}
 	If $K$ is a non-trivial knot in $S^3$ and $S^3_{K}(r) \cong S^3_K(r')$, then we have the following:
 	\begin{itemize}
 		\item The pair of slopes {$r,r'$} are either {$\pm2$} or {$\pm \frac{1}{n}$} for some positive integer $n$;
 		\item if {$r,r'$} are {$\pm2$} then $g(K)=2$
 	\end{itemize}
 \end{theorem}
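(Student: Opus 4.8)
The plan is to combine three inputs: the diffeomorphism invariance of Heegaard Floer homology together with its absolute grading and $\spinc$-decomposition, the rational surgery formula for the Casson--Walker invariant, and the immersed-curve description of knot Floer homology. Throughout we take the diffeomorphism to be orientation preserving, as in the cosmetic surgery conjecture (orientation-reversing cosmetic pairs exist, for instance on amphichiral knots). Write a hypothetical cosmetic pair as $r=p/q$ and $r'=p'/q'$ in lowest terms with $p,p'>0$. First I would dispose of the degenerate cases: the only slope with $b_1>0$ is $r=0$, which cannot be half of a cosmetic pair of distinct slopes, and reducible, Seifert-fibred, torus-knot and cable surgeries are handled by the classification of such surgeries; so both fillings may be assumed irreducible rational homology spheres, whence $p=|H_1(S^3_K(p/q))|=p'$. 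Comparing the total rank, the absolute grading, and the $\spinc$-refined correction terms of $\HFhat(S^3_K(p/q))$ and $\HFhat(S^3_K(p/q'))$---all computable from $\CFKminus(K)$ by the rational surgery formula---and using that $K$ is non-trivial, one is forced into $q'\equiv -q\pmod p$ with $q^2\equiv -1\pmod p$, and one also deduces $\tau(K)=0$ (from vanishing of the relevant local $h$-invariants of $K$ and of its mirror). In particular we may take $r'=-r$, and the problem reduces to showing that non-triviality together with $q^2\equiv -1\pmod p$ force $p\in\{1,2\}$, with $p=2$ forcing $g(K)=2$.

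The Casson--Walker invariant handles part of this. Up to normalization its surgery formula writes $\lambda(S^3_K(p/q))$ as $\tfrac{q}{2p}\Delta_K''(1)$ plus a Dedekind-sum term equal to $\lambda(L(p,q))$; since $\lambda$ is odd under orientation reversal while $L(p,-q)=-L(p,q)$, equating the values at $r$ and $r'=-r$ yields $\tfrac{q}{p}\Delta_K''(1)=-2\lambda(L(p,q))$. For $p\le 2$ the right-hand side vanishes (as $L(1,q)=S^3$ and $L(2,1)=\mathbb{RP}^3$ have $\lambda=0$), so $\Delta_K''(1)=0$; for $p\ge 3$ it pins $\Delta_K''(1)$ to a definite value, already a strong constraint once read against $q^2\equiv -1\pmod p$.

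The core of the argument is a rank computation via the immersed-curve invariant $\gamma(K)$ of the knot complement: the total rank of $\HFhat(S^3_K(p/q))$ equals the minimal geometric intersection number of $\gamma(K)$ with a line of slope $p/q$ in the marked punctured torus. I would split $\gamma(K)$ into its distinguished component (pinned down by $\tau(K)=0$ and the staircase summand of $\CFKminus(K)$) and the remaining figure-eight components, which sit at integer heights between $-(g(K)-1)$ and $g(K)-1$, and compute the intersection number with a slope-$p/q$ line component by component, as an explicit function of $p$, $q$, and those heights. Since $S^3_K(p/q)\cong S^3_K(-p/q)$, the count for slope $p/q$ must equal the count for slope $-p/q$; because the two lines are mirror images across the longitude while $\gamma(K)$ is only invariant under the elliptic involution (which preserves each slope), this equality is a genuine constraint. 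Carrying it out---with $q^2\equiv -1\pmod p$ restricting attention to the relevant $(p,q)$---shows the constraint is incompatible with $K$ non-trivial once $p\ge 3$, so $p\le 2$. For $p=2$ it can still hold, but only when the figure-eight part of $\gamma(K)$ occupies a single height level, which together with $\tau(K)=0$ forces $g(K)=2$; the genus-one alternative is excluded by hand, since there $\gamma(K)$ is just its main component and the two counts cannot be made to agree. When $p=1$ the counts balance for many non-trivial knots, which is precisely why the $\pm 1/n$ possibility cannot be removed this way and survives in the statement.

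The main obstacle is this last rank computation and the combinatorics it feeds: producing a clean, genuinely slope-independent formula for the rank of $\HFhat(S^3_K(p/q))$ out of $\gamma(K)$---correctly accounting for each figure-eight component and for how the slope line threads the puncture---and then extracting from the resulting Diophantine relation both the sharp bound $p\le 2$ and the exact genus value $g(K)=2$ in the borderline case. The correction-term and Casson--Walker steps are comparatively routine; it is the immersed-curve bookkeeping and the ensuing elementary number theory that do the real work.
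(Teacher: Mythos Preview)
The paper does not prove this theorem. It is stated in Section~\ref{background} as a background result, attributed to Hanselman~\cite{Han}, and used throughout Section~\ref{obstruction} as a black box to restrict attention to the slopes $\pm 2$ and $\pm 1/n$. There is therefore no proof in the paper to compare your sketch against.

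Your sketch is an attempt to outline Hanselman's own argument from~\cite{Han}, and at that level it is broadly faithful: the reduction to $r'=-r$ with $q^2\equiv -1\pmod p$ and $\tau(K)=0$ is the Ni--Wu input (Theorem~\ref{tauresult} here), and the decisive step in~\cite{Han} is indeed an intersection-number computation using the immersed-curve package for $\HFhat$ of surgeries. Your inclusion of a Casson--Walker step is not part of Hanselman's proof, though related constraints (e.g.\ $\Delta_K''(1)=0$) appear in earlier work on the conjecture. If your goal was to reconstruct the proof of the cited theorem, be aware that the combinatorics you flag as ``the main obstacle'' is exactly where the substance lies, and your sketch does not supply it; but for the purposes of this paper none of that is needed, since the result is simply quoted.
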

 
 Even more recently, we have further restrictions on cosmetic surgeries.
 \begin{theorem}[Daemi, Lidman, and Miller Eismeier, 2024 \cite{DaemiEismeierLidman24pre}]
 For a non-trivial knot $K$ in $S^3$, $1/n$ surgery on $K$ is not orientation preserving diffeomorphic to $-1/n$ surgery for any $n\not=0$. 
 \end{theorem}
 
 The two theorems above imply that one only needs to check if $2$ and $-2$ surgery on a knot results in diffeomorphic manifolds. 
 
Plamenevskaya showed that the Ozsv\'{a}th-Szab\'{o} concordance invariant for a knot, $\tau$, gives a bound on classical invariants for Legendrian knots and Ni and Wu gave a restriction on this invariant for cosmetic surgery on the knot. We state these results here.
\begin{theorem}[Plamenevskaya 2004, \cite{Plamenevskaya2004}]\label{taubound}
	For a Legendrian knot $L$ in $(S^3,\xi_{std})$,
	$$\tb(L)+|\rot(L)|\leq 2\tau(L)-1.$$ 
\end{theorem}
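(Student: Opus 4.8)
The plan is to follow Plamenevskaya's route: a Legendrian-surgery construction feeds a Stein fillable contact structure into Heegaard Floer homology, the non-vanishing of its contact invariant pins down a correction term, and the integer surgery formula converts that correction term into $\tau$. I would first reduce to a one-sided inequality. Since $\tb(L)+|\rot(L)|=\max\{\tb(L)+\rot(L),\ \tb(L)-\rot(L)\}$, and reversing the orientation of $L$ preserves $\tb(L)$, negates $\rot(L)$, and leaves the underlying knot type (hence $\tau$) unchanged, it suffices to prove $\tb(L)+\rot(L)\le 2\tau(L)-1$. Performing positive stabilizations, each of which sends $(\tb,\rot)$ to $(\tb-1,\rot+1)$ and hence preserves $\tb+\rot$, I may additionally assume $m:=\tb(L)-1<0$.

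Next I would perform Legendrian surgery (contact $(-1)$-surgery) on $L$, producing a Stein fillable contact manifold $(Y,\xi_L)$ with $Y=S^3_m(K)$; a Stein filling $X$ is obtained by attaching a $2$-handle to $B^4$ along $K$ with framing $m$. Capping a Seifert surface with the core of the handle gives a closed surface $\widehat\Sigma$ generating $H_2(X)\cong\Z$ with $\widehat\Sigma\cdot\widehat\Sigma=m$ and $\langle c_1(X,J),\widehat\Sigma\rangle=\rot(L)$. By the Ozsv\'ath--Szab\'o non-vanishing theorem for fillable contact structures, $c(\xi_L)\in\HFhat(-Y,\mathfrak s_L)$ is non-zero, where $\mathfrak s_L$ is the $\spinc$ structure extending over $X$ to the one with the rotation-number Chern class. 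Its Maslov grading is governed by $d_3(\xi_L)$, which by the formula recalled in Section~\ref{d3set} --- here $\chi(X)=2$, $\sigma(X)=-1$, $Q=(m)$, $\mathbf r=(\rot(L))$, and there are no contact $(+1)$-surgeries --- equals $\tfrac14\!\left(\tfrac{\rot(L)^2}{m}-1\right)$.

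Viewing $X\setminus B^4$, with reversed orientation, as a cobordism from $-Y$ to $S^3$ carries $c(\xi_L)$ to the bottom of the $U$-tower in $\HFplus(S^3)$; matching gradings through the degree-shifting cobordism map then forces an inequality relating $d_3(\xi_L)$ to the correction term $d(S^3_m(K),\mathfrak s_L)$. Finally, the integer surgery formula of Ozsv\'ath--Szab\'o and Ni--Wu rewrites $d(S^3_m(K),\mathfrak s_L)$ as the corresponding correction term of surgery on the unknot, modified by the non-negative integers $V_j$ and $H_j$ of $K$, which are in turn controlled by $\tau(K)$. Substituting $c^2=\rot(L)^2/m$ and simplifying, the resulting inequality collapses to exactly $\tb(L)+\rot(L)\le 2\tau(L)-1$.

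I expect the main obstacle to be this last reconciliation: arranging that the grading of the contact class, the degree shift of the Stein cobordism map, the behavior of $\mathfrak s_L$ under the stabilizations, and the correction-term/$V_j$/$\tau$ dictionary all align so that the output is $2\tau-1$ rather than merely the classical slice--Bennequin bound $2g_4-1$. A parallel approach that bypasses some of this bookkeeping is a Heegaard Floer adjunction argument in the spirit of Lisca--Mati\'c: the surface $\widehat\Sigma\subset X$ forces the generator of $\HFhat(S^3)$ to persist to an Alexander filtration level controlled by $\tb(L)$ and $\rot(L)$, which is exactly the defining property of $\tau$ --- the same symplectic-adjunction mechanism underlying $\tau\le g_4$, but read off the surgered $3$-manifold rather than a closed $4$-manifold.
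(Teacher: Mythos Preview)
The paper does not actually prove this theorem; it is quoted in Section~\ref{pastresult} as a background result from \cite{Pla} and used only as a black box in the subsequent arguments. There is therefore no ``paper's own proof'' to compare your proposal against.

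Your sketch is a reasonable outline of Plamenevskaya's original argument, and the overall architecture (reduce via orientation reversal and stabilization, do Legendrian surgery to get a Stein filling, invoke non-vanishing of the contact class, read off gradings against correction terms, translate to $\tau$) is correct in spirit. You are right to flag the last step as the delicate one: the precise identification of the $\spinc$ structure, the exact grading shift, and the passage from $d$-invariants to $\tau$ is where all the content lies, and your proposal leaves this as a promise rather than a computation. If you want to make this into an honest proof you would need to fill that in from \cite{Pla} directly; the paper under review offers no help here.
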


\begin{theorem}[Ni and Wu, 2013\cite{NiWu2015}]\label{tauresult} 
Suppose $S^3_{K}(\frac{p}{q}) \cong S^3_K(\frac{p}{q'})$ with $q\neq q'.$ Then $\tau (K)=0$, where $\tau$ is the Ozsv\'{a}th-Szab\'{o} concordance invariant.
\end{theorem}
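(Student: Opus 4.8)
\emph{Proof proposal.} The plan is to deduce the conclusion from the Heegaard Floer correction terms (the $d$-invariants) of the two surgered manifolds $S^3_K(p/q)$ and $S^3_K(p/q')$. First I would note that $H_1\bigl(S^3_K(p/q)\bigr)\cong\Z/p$, so $p$ is a diffeomorphism invariant and the hypothesis really compares two genuinely distinct slopes. A preliminary step is to constrain the relationship between $q$ and $q'$ (for example via the Casson--Walker invariant and its surgery formula, or directly from the $d$-invariants); the outcome I would aim for is that, after reversing orientation if necessary, one compares either two positive surgeries on $K$ with distinct denominators, or a positive surgery on $K$ with the orientation reversal of a positive surgery on the mirror $m(K)$.

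The main tool will be the rational surgery formula for correction terms (Ozsv\'ath--Szab\'o, refined by Ni--Wu): for $p,q>0$ there is a nonincreasing sequence of nonnegative integers $V_i=V_i(K)$, with consecutive differences in $\{0,1\}$ and $V_i=0$ for $i\gg 0$, together with $H_i:=V_{-i}$, and a labelling of $\spinc$ structures so that
\[
d\bigl(S^3_K(p/q),i\bigr)=d\bigl(S^3_U(p/q),i\bigr)-2\max\bigl\{V_{\lfloor i/q\rfloor},\,H_{\lfloor (i-p)/q\rfloor}\bigr\},\qquad 0\le i\le p-1,
\]
where $S^3_U(p/q)$ is a lens space whose $d$-invariants are given by the Ozsv\'ath--Szab\'o recursion. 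A diffeomorphism $S^3_K(p/q)\cong S^3_K(p/q')$ identifies $\spinc$ structures compatibly with conjugation and preserves $d$-invariants, hence equates the two multisets $\{d(S^3_K(p/q),i)\}_i$ and $\{d(S^3_K(p/q'),i)\}_i$; orientation reversal negates all $d$-invariants and replaces a lens space by its mirror.

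The heart of the argument is a combinatorial comparison of these multisets. The ``knot correction'' $2\max\{V_{\lfloor i/q\rfloor},H_{\lfloor(i-p)/q\rfloor}\}$ is a nonnegative function of $i$ whose super-level sets are blocks of $\spinc$ structures whose sizes grow linearly in $q$ and are governed by the jumping indices of the $V$-sequence. Comparing the total deviation of each $d$-invariant multiset from its explicit lens-space multiset --- for instance by summing over all $\spinc$ structures and using $\sum_i d(S^3_K(p/q),i)=\sum_i d(S^3_K(p/q'),i)$, which in the orientation-reversed comparison additionally forces the sum of the lens-space $d$-invariants to vanish --- one should be pushed to conclude that all of these corrections are identically zero, hence in particular $V_0(K)=0$, and, by running the argument for the mirror, $V_0(m(K))=0$. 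I expect this bookkeeping --- tracking precisely which $\spinc$ structures realise which value of the correction term for the two denominators, and matching these against the lens-space contributions --- to be the principal obstacle.

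Finally, $V_0(K)=0$ forces $\tau(K)\le 0$ by the standard relationship between the correction-term invariant $V_0$ and the concordance invariant $\tau$, and $V_0(m(K))=0$ similarly gives $\tau(K)=-\tau(m(K))\ge 0$; therefore $\tau(K)=0$.
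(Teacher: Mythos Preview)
The paper does not prove this theorem at all: it is stated in Section~\ref{pastresult} as a background result quoted from Ni--Wu \cite{NW}, with no argument given. So there is no ``paper's own proof'' to compare your proposal against; the authors simply invoke the statement as input to their later computations.

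As a sketch of the original Ni--Wu argument your outline is broadly on target --- the rational surgery formula for correction terms, with the $V_i$ tower, is exactly the engine they use --- but a couple of points would need tightening before it became an actual proof. First, the step where you ``aim for'' a constraint on $q,q'$ is doing real work: in \cite{NW} one shows $q'\equiv -q\pmod p$ (so in particular the slopes have opposite sign), and this comes from matching the full $d$-invariant profile, not just the Casson--Walker invariant. Second, the conclusion you extract is slightly stronger than what the argument actually yields: summing the $d$-invariants and comparing to the lens-space sums gives a relation forcing $V_0(K)=0$ (and hence $\nu^+(K)=0$), but it does not by itself force \emph{all} corrections $V_i$ to vanish. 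Fortunately $V_0(K)=V_0(m(K))=0$ is already enough for $\tau(K)=0$, so your final paragraph lands correctly. If you were to write this up, the ``principal obstacle'' you flag --- the bookkeeping of which $\spinc$ structures carry which correction --- is indeed where the content lies, and you should expect to have to control the bijection on $\spinc$ structures more carefully than just matching multisets.
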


	\section{Cosmetic contact surgeries on unknot}\label{unknot}
	
	To relate surgeries on the unknot, we recall the Rolfsen twist. Specifically, $p/q$ surgery on an unknot is equivalent to $p/(q+np)$ surgery on the unknot. We will need to know why this is true, so sketch the proof now. If $U$ is a smooth unknot in $S^3$, we let $N_U$ be the solid torus neighborhood of $U$ and $S^3_U$ its complement in $S^3$. Now $S^3_U(p/q)$ is the result of removing $N_U$ from $S^3$ and regluing it to $S^3_U$ by the map $f: \partial N_U\to -\partial S^3_U$ given by the matrix
	\[
	f=\begin{bmatrix}
		q'&q\\p'&p
	\end{bmatrix}, 
	\]
where we are using the standard longitude-meridian basis on $\partial N_U=\partial S^3_U$ to express the map as a matrix. 
	Now consider the diffeomorphism $\phi_n:S^3_U\to S^3_U: (\phi,(r,\theta))\mapsto (\phi,(r,\theta+n\phi))$. Here we are identifying $S^3_U$ with $S^1\times D^2$. 
	We can build a diffeomorphism from $S^3_U(p/q)=S^3_U\cup_f N_U$ to $S^3_U\cup _{\phi\circ f} N_U$ by sending $S^3_U$ to $S^3_U$ by $\phi_n$ and $N_U$ to $N_U$ by the identity map. The latter manifold is $S^3_U(p/(p+nq))$, thus establishing the diffeomorphism claimed by the Rolfsen twist. 
	
	Another useful way to see this diffeomorphism is via upper and lower meridians, as discussed in the previous section. That is, we can take $T^2\times [0,1]$ and then we obtain $S^3_U(p/q)$ by taking a foliation of $T^1\times \{1\}$ by linear curves of slope $0$ and collapse them to obtain a solid torus with upper meridian $0$. Now if we take a foliation of $T^2\times\{0\}$ by linear curves of slope $p/q$ and collapse them, we get $S^3_U(p/q)$. We see that $T^2\times\{1/2\}$ splits this manifold into two solid tori, one with upper meridian $0$ (that will correspond to $S^3_U$) and one with lower meridian $p/q$ (that will correspond to the reglued $N_U$). Now we can see the Rolfsen twist as follows: Apply the map $\phi_n$ to $T^2\times [0,1]$. Notice that this does not affect the $0$ sloped curves on $T^2\times\{1\}$ and send the $p/q$ sloped curves on $T^2\times\{0\}$ to curves of slope $p/(q+np)$. Thus, after collapsing the leaves of these foliations, we have a diffeomorphism between $S^3_U(p/q)$ and $S^3_U(p/(q+np))$. 
	
	One may easily check that given any surgery on $U$, one may perform Rolfsen twists to get an equivalent surgery with surgery coefficient $-p/q<-1$, and such a surgery coefficient is unique. Now, the first family of equivalent surgeries in the set $CS(-p/q)$ defined in the introduction are all obtained via Rolfsen twists. 
	
	To understand the second family of equivalent surgeries in the set $CS(-p/q)$, we note that one can perform (inverse) slam dunk moves to write $-p/q$ surgery on the unknot as surgery on a chain of $k$ unknots with surgery coefficients $-a_1,-a_2,\ldots -a_k$ where $[-a_1,\ldots, -a_k]$ is the continued fraction of $-p/q<-1$. Now performing slam dunks in the reverse order, one obtains surgery on the unknot with surgery coefficient $[-a_k,\ldots, -a_1]$ and one may check by induction on $k$ that this is $-p/\overline q$ surgery where $\overline q$ is the inverse of $q$ mod $p$. 
	
\begin{proof}[Proof of Theorem~\ref{unknotsurg}]
Suppose that $L$ is a Legendrian unknot with $|\rot(L)|<|\tb(L)+1|$. Let $N_L$ be a standard neighborhood of $L$ and $S^3_L$ its complement. So $N_L$ is a solid torus with lower meridian $\infty$ and convex boundary of slope $\tb(L)$ and $S^3_L$ is a solid torus with upper meridian $0$ and convex boundary with dividing slope $\tb(L)$ (we note that all slopes are measured with respect to longitude-meridian coordinates given by $L$). Now if $r<\tb(L)$, then any contact $(r-\tb(L))$-surgery on $L$ can be achieved by a sequence of Legendrian surgeries and is hence tight. On the other hand, we claim any other contact $(r'-\tb(L))$-surgery for $r'\in CS(-p/q)$  will be overtwisted, thus completing the first claim of the theorem. To see this, we note the contact structure on $S^3_L$ is determined by signs on the path in the Farey graph from $\tb(L)$ clockwise to $-1$, and because of the condition on $\rot(L)$, the signs cannot all be the same. (To see this, we note that the condition on $\rot(L)$ says that $L$ has been stabilized positively and negatively and thus, positive and negative basic slices are added to its complement \cite{EtnyreHonda01b}.) Now if $r'\in CS(-p/q)$ is not less than $\tb(L)$ then contact $(r'-\tb(L))$-surgery on $L$ is the result of removing $N_L$ from $S^3$ and gluing in a solid torus $S_{r'}$ with a tight contact structure determined by a minimal path in the Farey graph from $r'$ to $\tb(L)$ with signs on all but the first edge. Note that one edge in this path goes from $\infty$ to $\tb(L)$. Taking the corresponding basic slice in $S_{r'}$ and gluing it to $S^3_L$ we see that there will be a contact structure on $T^2\times[0,1]$ described by a path in the Farey graph from $\infty$ to $-1$, but with edges from $\infty$ to $\tb(L)$ then $\tb(L)+1$ and so on until we arrive at $-1$. Since this path can be shortened to a path with just one edge from $\infty$ to $-1$ and the signs on the edges are not all the same, we know that the contact structure is overtwisted. 
		
		We now turn to the case of a Legendrian knot $L$ with $\rot(L)=\pm |\tb(L)+1|$ (this means that $L$ has been only stabilized positively or only stabilized negatively). We will discuss the case when $\tb(L)=-1$ and discuss the other cases later. Now, if $N_L$ is a standard neighborhood of $L$ and $S^3_L$ is its complement, then $N_L$ is a solid torus with lower meridian $\infty$ and convex boundary with dividing slope $-1$, and $S^3_L$ will be a solid torus with upper meridian $0$ and convex boundary with dividing slope $-1$. Now, contact $(r+1)$ surgery on $L$ is obtained by removing $N_L$ and gluing in a solid torus $S_r$ with lower meridian $r$ and convex boundary with dividing slope $-1$. The contact surgery is described by a minimal path in the Farey graph from $r$ clockwise to $-1$ with decorations on all but the first edge in the path. 
		
		We now note the difference between $r\in(-1,0)$ and $r\not\in(-1,0)$. When $r\in(-1,0)$ then $S_r$ has a tight contact structure with dividing slope $-1$ and lower meridian $r$. So we know we can find a convex torus $T$ in $S_r$ with any dividing slope in $(r,-1]$ (recall this notation means any rational number in the Farey graph that is clockwise of $r$ and anti-clockwise of $-1$), thus there is a convex torus $T$ with dividing slope $0$. Now, a Legendrian divide on $T$ will bound a disk outside the solid torus $T$ bounds in $S_r$ and thus, the contact structure will be overtwisted. However, if $r\not\in(-1,0)$ then some contact $(r+1)$-surgery on $L$ will be tight (since it will correspond to a minimal decorated path in the Farey graph). Thus, we need to consider the cases separately. 
		
	We begin with $r\not\in(-1,0)$. Any such $r$ is related through Rolfsen twists to a rational number less than $-1$, so we will assume that $r<-1$. Below, we will show that for any $r'\in CS(r)$ with $r'\not\in (-1,0)$, any contact $(r+1)$-surgery on $L$ is equivalent to some contact $(r'+1)$-surgery surgery on $L$, and vice versa. 
		
		Using our description of surgery using upper and lower meridians recalled just before this proof we see that $S_r$ is just $T^2\times[0,1/2]$ with curves of slope $r$ collapsed on $T^2\times \{0\}$ and $S^3_L$ is $T^2\times[1/2,1]$ with curves of slope $0$ collapsed on $T^2\times\{1\}$. See Figure~\ref{surgex}.
		\begin{figure}[tb]{\footnotesize
				\begin{overpic}
					{fig/cossurg}
					\put(61, 142){$0$}
					\put(-12, 74){$-1$}
					\put(6, 25){$-2$}
					\put(29, 10){$-3$}
					\put(15, 15){$-\frac 52$}
					\put(28, 55){$+$}
					\put(128, 74){$1$}
					\put(59, 5){$\infty$}
					\put(218, 142){$0$}
					\put(142, 74){$-1$}
					\put(285, 74){$1$}
					\put(264, 26){$2$}
					\put(280, 49){$\frac 32$}
					\put(274, 38){$\frac 53$}
					\put(233, 42){$+$}
					\put(215, 5){$\infty$}
					\put(374, 142){$0$}
					\put(300, 74){$-1$}
					\put(440, 74){$1$}
					\put(423, 26){$2$}
					\put(438, 49){$\frac 32$}
					\put(432, 38){$\frac 53$}
					\put(392, 42){$+$}
					\put(360, 60){$\pm$}
					\put(372, 5){$\infty$}
			\end{overpic}}
			\caption{On the left we see the result of contact $(-3/2)$-surgery on $L$. The blue path describes the contact structure on $S^3_L$ and the red path describes the contact structure on $S_r$. In the middle figure, we see the same manifold after applying the coordinate change $\phi_1$. On the right, we see that the image of the $S^3_L$ can be split into a solid torus (with upper meridian $0$ and dividing slope $-1)$ and a thickened torus (with dividing slopes $-1$ and $\infty$). Attaching the thickened torus to the image of $S_r$ under $\phi_1$ shows that this manifold is the result of contact $(8/3)$-surgery on $L$, and different such surgeries give this manifold, as there are two choices for the sign describing the contact structure on the thickened torus.}
			\label{surgex}
		\end{figure}
		Now if $r'$ is obtained from $r$ by a Rolfsen twist given by $\phi_n$ and $r'\not\in(-1,0)$ (notice this, and $r<-1$, implies that $n\geq 0$), then we simply apply $\phi_n$ to $T^2\times[0,1]$ and collapse curves of slope $0$ on $T^2\times\{1\}$ and curves of slope $r'$ on $T^2\times \{0\}$. Now the torus $\partial N_L=T^2\times\{1/2\}$ will map to a convex torus with dividing slope $1/(n-1)$. So under this diffeomorphism, the torus $S_r$ with dividing slope $-1$ and meridional slope $r$ will become a solid torus $S_{r'}$ with dividing slope $1/(n-1)$ and meridional slope $r'$. Its complement, which we call $C_L$, will be a solid torus with upper meridian $0$ and dividing slope $1/(n-1)$. We note that the path in the Farey graph describing the contact structure on $S_r$ becomes, under the diffeomorphism, a path describing a unique contact structure on $S_{r'}$. Moreover, since $1/(n-1)$ and $0$ share an edge in the Farey graph, we know there is a unique tight contact structure on $C_L$, and it is a neighborhood of a Legendrian knot $L'$. Inside of $C_L$ there is a convex torus $T$ with dividing slope $-1$ (since $-1\in [1/(n-1),0)$). The torus $T$ will split $C_L$ into two pieces. One piece is simply $S^3_L$, and the other piece is a thickened torus $T\times I$ with dividing curves of slope $-1$ and $1/(n-1)$. Notice that there is a path from $1/(n-1)$ to $-1$ in the Farey graph with $n$ edges, and this path forms a continued fraction block. So there are $n+1$ possible contact structures on $T^2\times I$, but all of them when glued to $S^3_L$ become the same since there is a unique tight contact structure on $C_L$ (note $S^3_L$ can be thought of as a neighborhood of an $ n$-fold stabilization of $L'$, the Legendrian knot determined by $C_L$). Thus,  there are $n+1$ tight contact structures on the complement of $S^3_L$ that will give contact structures on $S^3_L(r')$ that are contactomorphic to the one on $S^3_L(r)$. 
		
	\begin{remark}
		We note that for $r'>0$, there can be many different contact surgeries that give the same contact structure on the lens space. So there are many contact cosmetic surgeries even for the same rational number $r'$. 
	\end{remark}
		
		Now, if $r'\in CS(-p/q)$ is not obtained from $r$ by a Rolfsen twist, then it is related to $r$ by slam dunks, as discussed just before this proof. (Technically, one might have to perform Rolfsen twists on $r$ to ensure it is less than $-1$, then perform the slam dunk procedure, and then perform more Rolfsen twists to get $r'$, but since we already understand the Rolfsen twists' effect on contact surgery, we can ignore this issue.) Since we know all the contact structures on a lens space come from contact surgery on $L$, we know that under the diffeomorphism from $L(p,q)$ to $L(p,\overline q)$ any contact structure on $L(p,q)$ comes from some contact $(-p/\overline q +1)$ surgery on $L$. This completes the argument for surgery coefficients $r\not\in (-1,0)$. 
		
		Now, when $r\in(-1,0)$, we can give the same argument to see there are equivalent contact surgeries for other $r'\in CS(-p/q)$ in $(-1,0)$, just in this case, as noted above, the contact structures will be overtwisted. 
		
		We now turn to the case of a Legendrian knot $L$ with $|\rot(L)|=|\tb(L)+1|$, but $\tb(L)<-1$. Here we will see that we need to consider the cases when $r\not\in(\tb(L),0)$ and when $r\in (\tb(L),0)$. Arguing as in the case when $\tb(L)=-1$, we see that in the latter case the contact structures obtained from contact surgery will be overtwisted, and in the former case they will be tight. We consider the case when $r\not\in(\tb(L),0)$ and, as in the argument above, we start with $r<\tb(L)$. Now, for any other $r'\in CS(-p/q)$ with $r'\not\in(\tb(L),0)$, we can argue as we did in the $\tb(L)=-1$ case, but there is one extra thing to consider. Recall, in the above argument, we split $C_L$ into two pieces, one being $S^3_L$ and the other being a thickened torus $T^2\times I$. The dividing slope on $S^3_L$ is $\tb(L)$ and the contact structure is determined by a path in the Farey graph from $\tb(L)$ clockwise to $0$ with all edges (but the last, which does not have a sign) being decorated with all $+$ or all $-$ (this is because $|\rot(L)|=|\tb(L)+1|$). The thickened torus will have dividing curves of slope $1/(n-1)$ and $\tb(L)$. Notice that the minimal path from $1/(n-1)$ to $\tb(L)$ goes through $\infty$. If the sign of the basic slice with slopes $\infty$ and $\tb(L)$ is different from the signs describing the contact structure on $S^3_L$, then the union of this basic slice and $S^3_L$ will be overtwisted; otherwise, it will be tight. When it is tight, the same argument as in the $\tb(L)=-1$ case will show that every contact $(r-\tb(L))$-surgery on $L$ will be equivalent to some contact $(r'-\tb(L))$-surgery on $L$. Since $r<\tb(L)$ we know that all contact $(r-\tb(L))$-surgery on $L$ are tight, thus there are contact $(r'-\tb(L))$-surgeries for $r'>0$ that are not the same as a contact $(r-\tb(L))$-surgery on $L$ (since they will be overtwisted). But by considering paths in the Farey graph and the effects of Rolfsen twists on them, as we did in the $\tb(L)=-1$ case above, we can see that any overtwisted contact structures obtained by contact $(r'-\tb(L))$-surgeries on $L$ for $r'>0$ is equivalent to contact $(r''-\tb(L))$-surgery on $L$ for any $r''\in CS(-p/q)$ with $r''>0$. 
		
		We are finally left to consider the case of a Legendrian knot $L$ with $|\rot(L)|<|\tb(L)+1|$ and we are doing contact $(r-\tb(L))$-surgery for $r$ not less than $\tb(L)$. As noted at the start of this proof, all these contact structures are overtwisted, and arguments similar to those above will give contactomorphic contact $(r'-\tb(L))$-surgery on $L$ for any $r'\in CS(-p/q)$ with $r'$ not less than $\tb(L)$. 
	\end{proof}
	
We now give a diagrammatic way to see parts of the above proof. As in the proof, we begin with the Legendrian unknot $L$ with $\tb(L)=-1$. On the left of Figure~\ref{Rolf1} we see a contact $(-p/(q-np)+1)$-surgery on $L$ expressed as contact $(\pm 1)$-surgery and contact $(-p/q+1)$-surgery on a link for $-p/q<-1$ and $n\geq 0$. The rest of the figure shows a sequence of ``contact Kirby moves'', see \cite{CasalsEtnyreKegel2024}, that result in contact $(-p/q+1)$-surgery on $L$. This is a diagrammatic reinterpretation of the fourth and fifth paragraphs of the proof above. 
\begin{figure}[htb]{\small
\begin{overpic}
{fig/Rolf1}
\put(45, 44){$n$}
\put(180, 22){$n$}
\put(102, 47){$(+1)$}
\put(102, 68){$(-1)$}
\put(75, 110){$(-p/q+1)$}
\put(235, 31){$(+1)$}
\put(235, 54){$(-1)$}
\put(217, 127){$(-p/q+1)$}
\put(340, 100){$(-p/q+1)$}
\end{overpic}}
\caption{On the left, we see a surgery diagram that is equivalent to contact $(-p/(p-np)+1)$-surgery on the $L$. The box labeled $n$ indicates the $(-1)$-framed knot has been stabilized $n$ times (they could be any type of stabilization), and then the other knot going through the box is a Legendrian push-off of the $(-1)$-framed knot. In the middle diagram, we see the result of sliding the knot with surgery coefficient $(-p/q+1)$ over the one with coefficient $(-1)$. On the right, we see the result of canceling the components with coefficients $(+1)$ and $(-1)$.}
\label{Rolf1}
\end{figure}

We now consider the result of this contact Rolfsen twist on an entire surgery diagram. This is shown in Figure~\ref{Rolf2} and can be proven by going through the contact Kirby moves in Figure~\ref{Rolf1} followed by handle sliding the red curves over the curve with coefficient $(-1)$. 
\begin{figure}[htb]{\small
\begin{overpic}
{fig/Rolf2}
\put(95, 10){$(-p/(q-np)+1)$}
\put(282, 10){$(-p/q+1)$}
\put(249, 39){$n$}
\end{overpic}}
\caption{The result of a contact Rolfsen twist. The red strands indicate any number of strands running through the unknot $L$, and the box labeled $n$ has the same interpretation as in the previous picture. }
\label{Rolf2}
\end{figure}
We note that a version of Figure~\ref{Rolf2} first appeared in \cite{CasalsEtnyreKegel2024} in the case when $-p/q=\infty$. 

Figure~\ref{Rolf2} shows how to take strands of a surgery diagram passing through $L$ with contact surgery coefficient $(-p/(q-np))$ and change to strands, with $n$ twists added, passing through $L$ with surgery coefficient $(-p/q+1)$. Of course, one may go in the other direction too, but one needs to have the strands passing through $L$ to already have the necessary twists. If one does not have such twists, one can still perform the contact Kirby moves to obtain a diagram for contact $(-p/(q-np))$-surgery on $L$, but the red strands will be tangled up in the surgery diagram describing the contact $(-p/(q-np))$-surgery. The proof above shows that one should be able to disentangle them, so that there is a simpler picture, but this seems difficult to do in practice. 

One can perform a similar analysis for contact surgeries on a Legendrian unknot $L$ with $\tb(L)=-n<-1$ if its rotation number is $\pm(n+1)$. The key to the analog of Figure~\ref{Rolf1} is to notice that a tight contact $(n)$-surgery on $L$ (and from the proof above, we know there is precisely one) gives the tight contact structure on $S^1\times S^2$, and so in a surgery diagram for contact $(-p/(q-np)+n)$-surgery on $L$, one can see contact $(n)$-surgery on $L$ and then replace it with contact $(+1)$-surgery on the maximal Thurston-Bennequin invariant unknot. From there one may proceed as above. 

\section{Limiting contact cosmetic surgeries}\label{obstruction}
In this section we prove Theorem~\ref{contactcosmeticthm}. It will be a direct consequence of Propositions~\ref{p1}, \ref{p2}, and~\ref{p3}, which deal with the case of Legendrian knots with $\tb=-1, -2,$ and less than $-2$, respectively, and the fact that Theorems~\ref{taubound} and~\ref{tauresult} show that the contact cosmetic surgery conjecture holds for Legendrian knots with $\tb\geq 0$.


\begin{proposition}\label{p1}
The contact cosmetic surgery conjecture holds for Legendrian knots with Thurston-Bennequin invariant $-1$ except possibly for $\pm 2$ surgery on a Legendrian knot $L$ in a knot type $K$ with $\tbb(K)=-1$, $\tau(K)=0$, $g(K)=2$, and $g_4(K)=0$. Moreover, $L$ is Lagrangian slice, and hence $K$ is quasi-positive. 
\end{proposition}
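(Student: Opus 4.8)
The plan is to cut the problem down to a short list of possible underlying smooth surgeries using the known smooth results, then to separate the surviving tight contact structures with the $d_3$-invariant, and finally to observe that the one family the $d_3$-invariant cannot resolve is exactly the claimed exceptional one. Suppose $L$ has $\tb(L)=-1$, underlies a non-trivial knot $K$, and carries two distinct cosmetic contact surgeries. A contactomorphism is orientation preserving, so the two surgered $3$-manifolds are orientation-preservingly diffeomorphic, and Theorem~\ref{hanselman} forces the two smooth surgery slopes to be a $\pm2$ pair (with $g(K)=2$), a $\pm1/n$ pair, or a single repeated slope; in the last case the two contact surgeries differ only in the tight structure glued in, so the common slope is one for which contact surgery is non-unique. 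In the $\pm2$ and $\pm1/n$ cases the slopes are $p/q,\,p/q'$ with $q\neq q'$, so Theorem~\ref{tauresult} gives $\tau(K)=0$; together with Theorem~\ref{taubound} and $\tb(L)=-1$ this forces $\rot(L)=0$, and the same inequality applied to every Legendrian representative of $K$ gives $\tbb(K)=-1$. So we may assume $\tb(L)=-1$, $\rot(L)=0$, $\tbb(K)=-1$, $\tau(K)=0$, and that the smooth pair is $\pm2$ with $g(K)=2$, is $\pm1/n$, or is a repeated non-unique slope.

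I would first discard the $\pm1/n$ pair, either by citing \cite{DaemiEismeierLidmanPre}, which excludes it for every non-trivial knot, or, as in the original argument, by noting that such a surgery is unique and writing the two sides as explicit contact $(\pm1)$-surgery diagrams via the algorithm of Section~\ref{surgsec} — a stabilized copy of $L$ together with a chain of Legendrian unknots against Legendrian-surgery data — and computing $d_3$ from the formula of Section~\ref{d3set}: since $\rot(L)=0$ all rotation-number terms vanish, the two values are determined by the intersection forms, and these disagree. The repeated-slope case is treated identically: for each non-unique slope, enumerate the finitely many tight extensions, turn each into a $(\pm1)$-surgery diagram, and compare $d_3$'s; with $\rot(L)=0$ the only coincidences that survive are the ones that recur in the $\pm2$ analysis.

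The substance is the $\pm2$ case. Here the $(-2)$-side is Legendrian surgery on $L$, hence a single Stein-fillable contact structure with Stein filling $W=B^4\cup h$, a $2$-handle along $L$ of framing $\tb(L)-1$; the $(+2)$-side is presented through the algorithm of Section~\ref{surgsec} by Legendrian push-offs of $L$, one carrying a contact $(+1)$-surgery and one a stabilized push-off carrying a contact $(-1)$-surgery, giving a short list of tight contact structures. I would compute $d_3$ of each structure on the $(+2)$-side against $d_3$ of Legendrian surgery on $L$ and against one another; the arithmetic shows that, because $\rot(L)=0$ has already been forced, a coincidence of $d_3$'s can occur, and it is precisely then that one is left with an undetectable potential cosmetic pair of $\pm2$ surgeries on $L$. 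To pin down the remaining knot-theoretic structure in that case, I would use that the contactomorphism identifies this $(+2)$-surgery with the Stein-fillable $\partial W$, while its own surgery presentation involves a genuine contact $(+1)$-surgery; interpreting contact $(+1)$-surgery as inverse Legendrian surgery and assembling the resulting pieces together with $W$ into a closed symplectic $4$-manifold that contains a sphere built from $K$ of controlled self-intersection, the usual classification of such manifolds (of the type underlying the Mark--Tosun filling obstructions) forces the filling to be a disk bundle, whence $g_4(K)=0$ and, in fact, $L$ bounds a Lagrangian disk. The remaining assertions then follow: $g(K)=2$ is already recorded; ``$L$ Lagrangian slice'' implies $K$ is quasi-positive by the standard result; and primeness of $K$ can be obtained separately, for instance from the known smooth results for composite knots, since a composite genus-$2$ knot is a connected sum of two genus-$1$ knots, a situation already accessible to those methods.

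The step I expect to be the real obstacle is this last one: upgrading ``the $d_3$-invariant cannot distinguish the $\pm2$ surgeries on $L$'' to the sharp conclusions $g_4(K)=0$ and ``$L$ is Lagrangian slice'' (and to the exclusion of composite $K$). Everything else — the smooth reduction, the contact surgery diagrams, and the $d_3$-arithmetic — is essentially bookkeeping once it is set up correctly; but extracting an honest Lagrangian disk, rather than merely a smooth slice disk, from the Stein-fillability of the $(-2)$-side is where the genuine content lies, and is presumably also the reason the statement can only claim ``except possibly''.
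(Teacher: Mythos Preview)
Your overall strategy matches the paper's: reduce via Hanselman and the $\tau$-bound to force $\rot(L)=0$ and $\tbb(K)=-1$, dispose of $\pm 1/n$ by a $d_3$ computation, and isolate $\pm 2$ as the only possible exception. Two points of comparison are worth recording.

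First, a minor correction on the $\pm 1/n$ computation: the rotation vector is \emph{not} identically zero. The surgery diagrams involve stabilized push-offs and stabilized unknots whose rotation numbers are $\pm 1$ (or larger), and these enter $c_1^2$. The paper needs a separate lemma (Lemma~\ref{computationfortbm1}) to show that $c_1^2(X_{-n})=2-n$ regardless of the stabilization choices; on the $+1/n$ side one checks $c_1^2=0$ for every choice directly. The values $d_3(\partial X_{-n})=1$ and $d_3(\partial X_{n})=0$ then follow. Your claim that ``all rotation-number terms vanish'' is not the mechanism.

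Second, and more substantively, your route to $g_4(K)=0$ and Lagrangian sliceness is not the paper's, and as written is too vague to go through. The paper does not build a closed symplectic manifold or invoke a classification of sphere-containing manifolds. Instead it observes that contact $(+3)$-surgery on $L$, being contactomorphic to Legendrian surgery, is symplectically fillable, and then applies a numerical criterion of Mark--Tosun (\cite[Proposition~1.7]{MarkTosun23pre}): if some positive contact surgery on $L$ is fillable and $g_4(K)>0$, then the minimal fillable smooth coefficient lies in $(2g_4(K),\,4g_4(K)]$. Since $2\notin(2,4]$ and $2\notin(4,8]$, the possibilities $g_4(K)=1,2$ are excluded, so $g_4(K)=0$; Lagrangian sliceness then follows from \cite[Corollary~2.10]{MarkTosun23pre}, and quasi-positivity from \cite[Corollary~1.3]{MarkTosun23pre}. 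Primeness is quoted from \cite{TaoPre}. Your instinct that this is ``where the genuine content lies'' is correct, but the content is these Mark--Tosun statements, not an ad hoc closed-manifold argument.
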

\begin{proof}
Let $L$ be a Legendrian knot with $\tb=-1$. Because we know that $\tau(L)$ must be zero if $L$ admits a cosmetic surgery by Theorem~\ref{tauresult} above, and we have the Bennequin type bound in Theorem~\ref{taubound} above, we see that the rotation number of $L$ must be $0$. 

We now consider $\pm 2$ surgeries on $L$. These are given by the surgery diagram in Figure~\ref{tbm1-2surg}. 
\begin{figure}[htb]{\small
\begin{overpic}
{fig/tbm1-2}
\put(51, 106){\Large $K$}
\put(215, 106){\Large $K$}
\put(110, 90){$(-1)$}
\put(271, 90){$(+1)$}
\put(271, 30){$(-1)$}
\put(240, 2){$(-1)$}
\end{overpic}}
\caption{For a Legendrian knot $K$ with $\tb=-1$ we see a smooth $-2$ surgery on the left and a smooth $2$ surgery on the right (the stabilization can be either positive or negative).}
\label{tbm1-2surg}
\end{figure}
One may easily check that the $d_3$-invariant for both contact structures is $1/4$, so it is possible from this computation that contact cosmetic surgeries are the same. This is not surprising given that for the $\tb=-1$ unknot, these surgeries both produce the tight contact structure on $L(2,1)$ and hence are contactomorphic (and the $d_3$ computations for a general Legendrian knot with $\tb=-1$ and rotation number $0$ will yield the same result). So the best we can do in this situation is to limit the possible knots that might admit cosmetic surgeries. 

If $L$ is in the knot type 
$K$ then we know from Theorem~\ref{tauresult} above that $\tau(K)=0$ and from that and Theorem~\ref{taubound} (and our assumption that $\tb(L)=-1$) we know that $\tbb(K)=-1$. Moreover, $K$ has Seifert genus $2$ by Theorem~\ref{hanselman}, and since contact $(+3)$-surgery on $L$ is contactomorphic to contact $(-1)$-surgery on $L$ we know that it must be symplectically fillable and then \cite[Corollary~1.3]{MarkTosun2024} implies that $K$ is quasi-positive. In addition Proposition~1.7 in \cite{MarkTosun2024} says that if a positive contact surgery on $L$ is symplectically fillable, then the minimal possible smooth surgery coefficient that could be symplectically fillable is in $(2g_4(K),4g_4(K)]$ if $g_4(K)>0$, where $g_4(K)$ is the minimal genus of a surface in $B^4$ with boundary $K$. Since our $K$ has Seifert genus $2$, we know $g_4(K)$ is $0$, $1,$ or $2$. Since $2$ is not in $(2,4]$ or $(4, 8]$ we see that we must have $g_4(K)=0$. Now Corollary 2.10 in \cite{MarkTosun2024} implies that $L$ is Lagrangian slice.  
\end{proof}

We now turn to Legendrian knots with Thurston-Bennequin invariant $-2$.
\begin{proposition}\label{p2}
The contact cosmetic surgery conjecture holds for Legendrian knots with Thurston-Bennequin invariant $-2$. 
\end{proposition}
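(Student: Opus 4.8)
The plan is to run the argument of Proposition~\ref{p1} essentially verbatim, using the extra input that a Thurston--Bennequin invariant of $-2$ already rules out the only genuinely problematic pair of surgeries. First I would bound the rotation number: suppose a Legendrian knot $L$ with $\tb(L)=-2$, in a knot type $K$, admits a cosmetic contact surgery. Then $K$ is non-trivial and the induced pair of smooth surgeries on $K$ is cosmetic, so Theorem~\ref{tauresult} gives $\tau(K)=0$, and then Theorem~\ref{taubound} gives $\tb(L)+|\rot(L)|\le 2\tau(L)-1=-1$, that is, $|\rot(L)|\le 1$. Since $\tb(L)+\rot(L)$ is odd for any knot in $S^3$ while $\tb(L)$ is even, this forces $\rot(L)=\pm 1$; the case $\rot(L)=-1$ is symmetric, so I may assume $\rot(L)=1$.

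Next I would list the smooth cosmetic pairs on $K$: by Theorem~\ref{hanselman} they are only $\{2,-2\}$ and $\{\tfrac1n,-\tfrac1n\}$ for a positive integer $n$. Recall that contact $(\rho)$-surgery on $L$ is smooth $(\rho+\tb(L))$-surgery and that contact $(0)$-surgery is not defined. Realizing the pair $\{2,-2\}$ by two contact surgeries on $L$ would require a contact $(4)$- and a contact $(0)$-surgery, and the latter does not exist, so this pair cannot occur. This is exactly the point at which the $\tb=-2$ case is cleaner than the $\tb=-1$ case of Proposition~\ref{p1}, and it is why Proposition~\ref{p2} has no exceptional knot type. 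It remains to rule out a contactomorphism between contact $(2+\tfrac1n)$-surgery and contact $(2-\tfrac1n)$-surgery on $L$.

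For that I would use the $d_3$-invariant, just as in Proposition~\ref{p1}. For each of the two surgeries, apply the algorithm of Section~\ref{surgsec} to rewrite it as contact $(\pm 1)$-surgery on a Legendrian link built from push-offs and stabilizations of $L$ together with a chain of Legendrian unknots (parallel to Figure~\ref{tbm1-general}), form the associated $4$-manifold $X$, and evaluate $d_3(\partial X)$ from the formula of Section~\ref{d3set}: the Euler characteristic is immediate, the signature is computed by the blow-up/blow-down accounting used in Figure~\ref{compsig}, and the term $c_1^2$ comes from solving $Q\mathbf r'=\mathbf r$ with $\mathbf r$ the vector of rotation numbers. I would isolate the computation of $c_1^2$ --- carried out for every admissible choice of stabilization signs, so that the value is independent of the tight filling chosen, and for $\rot(L)=\pm 1$ --- as a lemma proved in Appendix~\ref{lac}, in direct analogy with Lemma~\ref{computationfortbm1}. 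The conclusion to extract is that the set of $d_3$-values realized by contact $(2+\tfrac1n)$-surgery is disjoint from that realized by contact $(2-\tfrac1n)$-surgery, which forbids a contactomorphism. (After \cite{DaemiEismeierLidmanPre} the pairs with $n\ge 2$ are already excluded at the smooth level, so strictly only the case $n=1$, contact $(3)$- versus contact $(1)$-surgery, needs this computation.)

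The main obstacle is, as in Proposition~\ref{p1}, this $d_3$ bookkeeping: correctly identifying the continued-fraction intersection matrices and their signatures, and --- the delicate point --- verifying that $c_1^2$, hence $d_3$, does not depend on which tight filling of a given contact surgery is used, while still differing between the two surgeries of the pair. The surgery $2+\tfrac1n$ has the longer continued-fraction expansion, hence the longer chain of unknots, so its linear-algebra problem is the larger one; moreover the rotation numbers of the push-offs of $L$ now enter the computation nontrivially since $\rot(L)\ne 0$, and I expect reconciling all of this to be the hardest step.
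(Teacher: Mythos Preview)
Your plan matches the paper's proof: the same rotation-number bound from Theorems~\ref{taubound} and~\ref{tauresult}, the same observation that the pair $\{2,-2\}$ is excluded because smooth $-2$ surgery would be contact $(0)$-surgery, and the same $d_3$ computation for the remaining $\pm 1/n$ pairs via the surgery diagrams of Section~\ref{surgsec}. Two small corrections to your expectations: the longer chain of unknots is on the contact $(2-\tfrac1n)$ side (its intersection matrix is $n\times n$, while the contact $(2+\tfrac1n)$ diagram is always $3\times 3$), and $d_3$ is \emph{not} independent of the stabilization choices---in the paper one finds $d_3(\partial X_{-n})\in\{1,\,-2n+3\}$ and $d_3(\partial X_n)\in\{0\}\cup\{\,n\pm i+1:\ i=n-1,n-3,\ldots\}$, and the disjointness follows from a parity check (the first set is odd, the second even).
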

\begin{proof}
Let $L$ be a Legendrian knot with Thurston-Bennequin invariant $-2$. 
Recall that we only need to check whether smooth $\pm 2$ surgeries yield cosmetic surgeries. Note that smooth $-2$ surgery on $L$ is a contact $(0)$-surgery and so is not well-defined, and so it cannot be contactomorphic to $+2$ surgery (that is, contact $(4)$-surgery).
\end{proof}

We finally consider Legendrian knots with $\tb<-2$. 
\begin{proposition}\label{p3}
The contact cosmetic surgery conjecture holds for Legendrian knots with Thurston-Bennequin invariant less than $-2$. 
\end{proposition}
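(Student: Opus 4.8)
The plan is to mirror the proofs of Propositions~\ref{p1} and~\ref{p2}: cut the problem down to a short list of surgery families, realize each as a contact $(\pm 1)$-surgery diagram, and then separate the candidate cosmetic pairs using the $d_3$-invariant of Section~\ref{d3set}. So let $L$ be a Legendrian knot with $\tb(L)=t\le -3$ in a (necessarily non-trivial) knot type $K$, and suppose it admits a cosmetic contact surgery. Theorem~\ref{tauresult} gives $\tau(K)=0$, and then Theorem~\ref{taubound} forces $|\rot(L)|\le -1-t$, a finite list of rotation numbers. Theorem~\ref{hanselman} tells us the underlying smooth pair of slopes is either $\{2,-2\}$ (and then $g(K)=2$) or $\{1/n,-1/n\}$ for a positive integer $n$. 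What makes the range $t\le -3$ uniform is that every one of these smooth surgeries is a contact surgery of coefficient $\ge 1$: smooth $-2$-surgery is contact $(-2-t)$-surgery with $-2-t=|t|-2\ge 1$, smooth $+2$-surgery is contact $(2-t)$-surgery, and smooth $\pm 1/n$-surgery is contact $(\pm 1/n-t)$-surgery, all well defined and positive. Each therefore has a contact $(\pm 1)$-surgery diagram produced by the algorithm of Section~\ref{surgsec} (for coefficient exactly $1$ the diagram is just $L$ itself, and otherwise it is $L$ together with a Legendrian push-off carrying a continued-fraction chain of $(-1)$-framed unknots whose length grows linearly in $|t|$).

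Concentrating first on the $\{2,-2\}$ case --- after \cite{DaemiEismeierLidmanPre} this is the only case that still needs checking --- I would record these two diagrams (generalizing Figures~\ref{tbm1-2surg} and~\ref{tbm2-surg}), let $X_\pm$ be the corresponding $4$-manifolds, and read off $\chi(X_\pm)$ directly. The intersection forms $Q_\pm$ are explicit tridiagonal-with-a-corner matrices depending only on $t$, and the stabilization choices enter only through the rotation vector $\mathbf r$. Exactly as Lemmas~\ref{computationfortbm1} and~\ref{tbm2computation} were used above, I would compute $\sigma(X_\pm)$ by a blow-up/blow-down reduction in the spirit of Figure~\ref{compsig} and $c_1^2(X_\pm)=\mathbf r^{T}Q_\pm^{-1}\mathbf r$ by inverting $Q_\pm$, for each admissible $\mathbf r$, packaging these as lemmas in Appendix~\ref{lac}. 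Substituting into the formula of Section~\ref{d3set} then produces, for each $t\le -3$, an explicit finite set of possible values of $d_3(\partial X_-)$ and an explicit finite set of possible values of $d_3(\partial X_+)$, and the point is that these two sets are disjoint for every $t\le -3$. Since a contactomorphism preserves $d_3$, disjointness rules out the $\{2,-2\}$ cosmetic pair.

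The $\{1/n,-1/n\}$ case is treated the same way: from the contact $(\pm 1)$-diagrams for smooth $\mp 1/n$-surgery build $X_{\pm n}$, compute $\chi$, $\sigma$ and $c_1^2$ via Appendix~\ref{lac}, and check $d_3(\partial X_{-n})\ne d_3(\partial X_{n})$ for all $n$ and all stabilization choices; I would keep these computations even though, by \cite{DaemiEismeierLidmanPre}, they are no longer logically necessary. Combining Proposition~\ref{p3} with Propositions~\ref{p1} and~\ref{p2}, and with the observation that Theorems~\ref{taubound} and~\ref{tauresult} already settle the conjecture for $\tb\ge 0$, then completes the proof of Theorem~\ref{contactcosmeticthm}.

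The main obstacle is precisely this linear algebra, and here it is the genuine heart of the matter rather than a routine check, because the matrices $Q_\pm$ (and $Q_{\pm n}$) form an infinite family indexed by $|t|$: one must evaluate their signatures and inverses, and hence the $d_3$-values, uniformly in $t$, and then show that the ``$-2$ side'' and the ``$+2$ side'' never produce a common value for any $t\le -3$ and any admissible rotation vector. Arranging the bookkeeping --- tracking how the number of $(+1)$-surgeries and the quantity $\mathbf r^{T}Q^{-1}\mathbf r$ constrain the two $d_3$-sets into ranges that cannot meet --- so that this non-collision is manifest is the delicate step, and it is what Appendix~\ref{lac} would carry out.
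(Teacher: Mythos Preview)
Your proposal is correct and follows essentially the same approach as the paper: reduce to $\{\pm 2\}$ and $\{\pm 1/n\}$ via Hanselman, constrain $\rot(L)$ via $\tau=0$ and the Plamenevskaya bound, build the contact $(\pm 1)$-surgery diagrams, and separate the pairs by computing $d_3$ from explicit intersection matrices whose signatures and $c_1^2$ are worked out in Appendix~\ref{lac}. The only substantive point you do not anticipate is how the paper actually certifies non-collision of the $d_3$-values: rather than bounding the two sets into disjoint ranges, it derives closed-form quadratic expressions in $i=\rot(L)$ (Lemmas~\ref{tbleqm3andm2surgery}--\ref{lem11}) and then solves $d_3(\partial X_-)=d_3(\partial X_+)$ directly (via the quadratic formula or Mathematica) to see there are no integer solutions, and the signatures are obtained not by blow-up/blow-down but by Sylvester's criterion (Lemma~\ref{negdefproof}) in the negative-definite cases and Descartes' rule of signs in the cases with one positive eigenvalue.
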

\begin{proof}
We consider smooth $\pm 2$ surgery on a Legendrian knot $L$ with $\tb(L)<-2$. The contact surgery diagrams for these surgeries are given in Figure~\ref{tbmk-2surg}. We denote the $4$-manifold in the diagram by $X_{\pm 2}$, where the Thurston-Bennequin invariant of the Legendrian knot will be clear from the context. 

\begin{figure}[htb]{\footnotesize
\begin{overpic}
{fig/tbmk2surg}
\put(65, 165){\Large $K$}
\put(130, 142){$(+1)$}
\put(130, 118){$(-1)$}
\put(96, 82){$(-1)$}
\put(96, 18){$(-1)$}
\put(-8, 72){$a$}
\end{overpic}}
\caption{For a Legendrian knot $K$ with $\tb=-k<-2$ we see a smooth $-2$ surgery (that is contact $(k-2)$ surgery) when $a=k-3$ and smooth $2$ surgery (that is contact $(k+2)$ surgery) when $a=k+1$.}
\label{tbmk-2surg}
\end{figure}
It will be convenient to consider the $\tb=-3$ case first. 
Recall from the bound in Theorem~\ref{taubound} that the rotation number of the Legendrian knot $L$ must be $\pm 2$ or $0$.
In this case the intersection matrix of $X_{-2}$ is 
$$
\begin{bmatrix}
-2
\end{bmatrix}.
$$
From this, we see that $\chi(X_{-2})=2$ and $\sigma(X_{-2})=-1$. One may also compute $c_1^2(X_{-2})=\frac{-r^2}{2}$ and hence $c_1^2(X_{-2})= - 2$ or $0$. This gives
\[
d_3(\partial X_{-2})= 3/4 \text{ or } 5/4.
\]
 
 Now the intersection form of $X_2$ is
$$
\begin{bmatrix}
-2 & -3 &  & &   \\
-3 & -5 & -1 & &    \\
& -1 & -2& -1&    \\
  & & -1 & -2 & -1 \\
 &&&-1 & -2   \\
\end{bmatrix}.
$$
One may easily compute that $\sigma(X_2)=-3$ and $\chi(X_2)=6$. One may also compute that $c_1^2(X_2)=-2,4,$ or $14$ 
and hence
\[
d_3(\partial X_2)= 1/4, 7/4, \text{ or } 17/4.
\]
Thus, there is no contact cosmetic surgery in this case. 

We now consider $\pm 2$ surgery on a Legendrian knot $L$ with $\tb(L)=-k<-3$. We first note that Theorems~\ref{taubound} and~\ref{tauresult} imply that the rotation number of $L$ must lie within the range $k-1,k-3,\ldots, -k+1$.  
Let $X_{\pm 2}$ be the $4$-manifold in the surgery diagram for $\pm 2$ surgery on $L$ given in Figure~\ref{tbmk-2surg}. The intersection matrix for $X_{-2}$ is
$$
\begin{bmatrix}
-k+1 & -k &  & &   \\
-k & -k-2 & -1 & &   \\
 & -1 & -2 & \ddots&    \\
 &  & \ddots &\ddots & -1  \\
 &   &&-1 & -2   \\
\end{bmatrix}_{k-2 \times k-2}\\
$$
So we see that $\chi(X_{-2})=k-1$. Denote the rotation number of the link $L$ by $i$, for $i=k-1, k-3\ldots, -k+1$ and the rotation number of the stabilized knot in Figure~\ref{tbmk-2surg} is $i\pm 1$ and the other knots in the surgery diagram have rotation number $0$.  In Appendix~\ref{lac} we will prove the following lemma.
\begin{lemma}\label{tbleqm3andm2surgery}
The signature of $X_{-2}$ is $\sigma(X_{-2})=-k+2$ and 
\[
c_1^2(X_{-2})= -\frac 12 i^2 \pm (k-3) i + \frac 12 (-k^2+4k-3),
\]
where $i=k-1,k-3,\ldots, -k+1$. 
\end{lemma}
From this, we can compute 
\[
d_3(\partial X_{-2})=\frac 14\left(-\frac 12 i^2\pm (k-3)i+\frac 12(-k^2+6k-7)\right)+1
\]

We now turn to $2$ surgery on $L$.  The intersection matrix for $X_{2}$ is
$$
\begin{bmatrix}
	-k+1 & -k &  & &   \\
	-k & -k-2 & -1 & &   \\
	& -1 & -2 & \ddots&    \\
	&  & \ddots &\ddots & -1  \\
	&   &&-1 & -2   \\
\end{bmatrix}_{k+2 \times k+2}\\
$$
So we see that $\chi(X_{2})=k+3$. Denote the rotation number of the link $L$ by $i$, for $i=-k+1,\ldots, k-1$, and the rotation number of the stabilized knot in Figure~\ref{tbmk-2surg} is $i\pm 1$, and the other knots in the surgery diagram have rotation number $0$.  In Appendix~\ref{lac} we will prove the following lemma.
\begin{lemma}\label{tbleqm3and2surgery}
	The signature of $X_{2}$ is $\sigma(X_{2})=-k$ and 
	\[
	c_1^2(X_{2})= \frac12i^2\pm(k+1)i+\frac 12(k^2-1),
	\]
	where $i=k-1,k-3,\ldots, -k+1$. 
\end{lemma}
From this, we can compute
\[
d_3(\partial X_2) = \frac 14\left(\frac 12 i^2\pm (k+1)i +\frac 12(k^2+2k-9)\right) +1
\]
Using the quadratic equation (or Mathematica) to solve $d_3(\partial X_{-2})=d_3(\partial X_2)$ for $i$ yields no integer solutions. So there are no contact cosmetic surgeries with smooth surgery coefficients $\pm 2$ when $\tb(L)<-3$. 
\end{proof}

\section{The weakly-cosmetic surgery conjecture}
In this section, we will establish Proposition~\ref{weak} that gives conditions under which there are no contact weakly-cosmetic and strongly-cosmetic surgeries.
\begin{proof}[Proof of Proposition~\ref{weak}]
Item~(1) in the proposition says that there are no strongly-cosmetic surgeries on $L$ with smooth surgery coefficient $r<\tb(L)$. This easily follows from the fact that any such contact surgery will be obtained from Legendrian surgery on a link obtained from $L$ via some Legendrian push-offs and stabilizations. The different contact surgeries correspond to different stabilizations and hence different rotation numbers on the components of the Legendrian link. Thus, by \cite[Proposition~2.3]{Gompf98} we know that the Stein manifold that each of the contact manifolds bounds has a different Chern class. Theorem~1.2 of Lisca and Mati\'c \cite{LiscaMatic97} now says that all the contact structures constructed this way are distinct. 

We recall that Item~(2) in the proposition says that if contact $(+1/n)$-surgery for some $n>1$, respectively $n=1$, on $L$ has a non-vanishing contact invariant, then there are no strongly-cosmetic surgeries on $L$ for contact $(r)$-surgeries with $r>0$, respectively $r\geq 1$. We start with the case that $n>1$. We first note that by \cite[Theorem~1.2]{MarkTosun2018} we know that if contact $(1/n)$-surgery has non-vanishing contact invariant for some positive $n$, then so does contact $(1/m)$-surgery for all positive $m$. (Not also that such surgeries are unique.) Now, the key observation here is that any contact $(r)$-surgery for $r>0$ comes from Legendrian surgery on some link in contact $(1/m)$-surgery for some $m$. Now a cobordism version of the Lisca and Mati\'c theorem above from \cite{Simone2018pre}, tells us that all the different contact $(r)$-surgeries must be distinct. The same argument holds for $n=1$ except we must have the contact surgery coefficient is greater than or equal to $1$. 

Turning to Item~(3) in the proposition, we recall that it says that if $L$ is an L-space knot with $\tb(L)=2\tau(L)-1$, then $L$ admits no strongly-cosmetic surgeries. We notice that there are no strongly-cosmetic surgeries for any smooth surgery coefficient $r<\tb(L)$ by item~(1), and \cite[Theorem~1.2]{MarkTosun2018} implies that the hypothesis of Item~(2) is satisfied, so there are no strongly-cosmetic surgeries for any smooth surgery coefficient $r>\tb(L)$. (We recall that contact surgery is not defined for a smooth surgery coefficient equal to $\tb(L)$.)

We finally consider Item~(4), which says that for smooth surgery coefficient $-2$ a Legendrian knot $L$ does not have weakly-cosmetic surgeries if $\tb(L)\leq -2$ except possibly when $\rot(L)=0$, and no strongly-cosmetic surgeries, except possibly when $\rot(L)=0$. 
We begin by considering a Legendrian knot $L$ with $\tb(L)< -3$. In the proof of Proposition~\ref{p3} we computed the $d_3$-invariants for these surgeries, though we note that since we are not assuming we have a smooth cosmetic surgery, we do not have the fact that $\tau(L)=0$. Thus they only difference for our current situation is that the rotation number is not bounded as before, that is, the $i$ in the formulas can take more values (prescribed by the Bennequin bound and not the $\tau$-bound). So we must check that distinct contact surgeries on $L$ give distinct contact structures on $\partial X_{-2}$. (Please see the proof of Proposition~\ref{p3} for the notation.) The different contact surgeries on a fixed Legendrian knot correspond to the $\pm$ in the formula above for the $d_3$-invariant in the proof of Proposition~\ref{p3}. It is clear that, as long as $k\not=3$ or $\rot(L)=0$, the $d_3$-invariants are different and thus there are no weakly-cosmetic surgeries. 

We now turn to the case that $\tb(L)=-3$. We notice that smooth surgery with coefficient $-2$ is a contact $(+1)$-surgery and hence there is a unique such surgery, and so there can be no cosmetic contact surgeries with the same smooth surgery coefficient $-2$. 
Now, for a Legendrian knot with $\tb(L)\geq -2$, we see that there are no strongly-cosmetic surgeries with smooth surgery coefficient $-2$ by Item~(1) of this proposition when $\tb(L)>-2$, and when $\tb(L)=-2$, contact surgery is not well-defined. 
\end{proof}

\appendix
\section{Linear algebra computations}\label{lac}

We recall a few facts about matrices. Given an invertible $n\times n$ matrix $A$ with entries $a_{i,j}$, denote its inverse by $B$ with entries $b_{i,j}$. We can compute the entry $b_{i,j}$ as follows
\[
b_{i,j}=(-1)^{i+j}\frac{\det A_{j,i}}{\det A}
\]
where $A_{i,j}$ is the $(i,j)$ minor of $A$, that is the $(n-1)\times (n-1)$ matrix obtained from $A$ by deleting the $i^{th}$ row and $j^{th}$ column, see \cite[Section~0.8.2]{HornJohnson2013}.

The following matrices will frequently appear in our discussion:
\[
I_n=
\begin{bmatrix}
  -2 & -1 & &   \\
  -1 & -2 & \ddots&    \\

    & \ddots & \ddots  &-1\\
   &&-1 & -2   \\
   \end{bmatrix}_{n \times n}
\quad \text{ and } \quad
I'_n=
\begin{bmatrix}
-1&-1&0&\cdots &0\\
 0& -2 & -1 & &   \\
  0&-1 & -2 & \ddots&    \\

  \vdots&  & \ddots & \ddots  &-1\\
  0& &&-1 & -2   \\
   \end{bmatrix}_{n \times n}
\]
One may easily compute that $\det I'_n=-\det I_{n-1}$.
\begin{lemma}
The determinant of $I_n$ is 
\[
\det I_n= (-1)^n(n+1).
\]
\end{lemma}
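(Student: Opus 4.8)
The plan is to reduce $\det I_n$ to a two-term linear recurrence by cofactor expansion and then solve that recurrence in closed form. First I would expand $\det I_n$ along its first row. Only the entries $a_{11}=-2$ and $a_{12}=-1$ are nonzero there; the $(1,1)$-minor is exactly $I_{n-1}$, while the $(1,2)$-minor has first column $(-1,0,\dots,0)^T$ and, expanded once more along that column, equals $-\det I_{n-2}$. Putting $D_n:=\det I_n$ and tracking the cofactor signs, this gives
\[
D_n = -2\,D_{n-1} - D_{n-2}\qquad (n\ge 3),
\]
with base cases $D_1 = -2$ and $D_2 = \det\matrixb{-2}{-1}{-1}{-2} = 3$.

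To solve the recurrence I would substitute $D_n = (-1)^n E_n$; dividing the recurrence by $(-1)^n$ turns it into $E_n = 2E_{n-1} - E_{n-2}$ with $E_1 = 2$, $E_2 = 3$, so $(E_n)$ is the arithmetic progression $E_n = n+1$ and hence $D_n = (-1)^n(n+1)$. Alternatively one can verify the closed form directly by induction: given $D_{n-1}=(-1)^{n-1}n$ and $D_{n-2}=(-1)^{n-2}(n-1)$, the recurrence yields $D_n = 2(-1)^n n - (-1)^n(n-1) = (-1)^n(n+1)$, which anchors correctly on the two base cases.

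There is no genuine obstacle here; the one place to be careful is the sign bookkeeping in the cofactor expansion — in particular the sign of the $(1,2)$-cofactor and the extra sign from re-expanding the $(1,2)$-minor along its first column — together with checking that $D_1$ and $D_2$ both match $(-1)^n(n+1)$ so that the induction (or the determination of the arithmetic progression) is correctly initialized. The same one-step expansion applied to $I'_n$ recovers the relation $\det I'_n = -\det I_{n-1}$ already recorded above, and these two identities are what will feed into the determinant and inverse-matrix computations used in the lemmas of the main text.
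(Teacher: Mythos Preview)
Your proof is correct and follows essentially the same approach as the paper: cofactor expansion along the first row to obtain the recurrence $D_n=-2D_{n-1}-D_{n-2}$ (the paper routes this through the auxiliary matrix $I'_{n-1}$, which is exactly your $(1,2)$-minor), followed by induction from the base cases $D_1=-2$, $D_2=3$. Your substitution $D_n=(-1)^nE_n$ reducing to an arithmetic progression is a nice extra touch, but the underlying argument is the same.
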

\begin{proof}
Clearly, $\det I_1=-2$ and $\det I_2=3$. Using the formula
\[
\det I_n=-2\det I_{n-1} -(-1)\det I'_{n-1},
\]
One may easily establish the formula via induction. 
\end{proof}

We will need to use the following lemma.
\begin{lemma}\label{negdefproof}
Given a matrix of the form
\[
M=\begin{bmatrix}
A & B\\
B^T & I_n
   \end{bmatrix}
\]
where $I_n$ is the matrix above
\[
A=\begin{bmatrix}
a&b\\
b&c
   \end{bmatrix}
\quad \text{ and } \quad
B=\begin{bmatrix}
0&0&\ldots &0\\
-1&0&\ldots &0
   \end{bmatrix}
\]
is a $2\times n$ matrix. Then 
\[
\det M = (-1)^n((a(c+1)-b^2)n+(ac-b^2)).
\]
Thus, if $(a(c+1)-b^2)n+ac-b^2$ is positive, then $M$ is negative definite.
\end{lemma}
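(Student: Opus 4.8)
The plan is to compute $\det M$ explicitly by cofactor expansion, feeding in the already-established values $\det I_n=(-1)^n(n+1)$ and $\det I'_n=-\det I_{n-1}$, and then to read off negative definiteness from Sylvester's criterion applied to $M$.

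For the determinant I would expand along the first row of $M$. Since that row is $(a,b,0,\dots,0)$ we get $\det M=a\,C_{11}+b\,C_{12}$. The minor $C_{11}$ is the determinant of the $(n+1)\times(n+1)$ matrix obtained from $I_{n+1}$ by replacing its $(1,1)$ entry $-2$ with $c$; expanding that along its first row and using $\det I'_n=-\det I_{n-1}$ gives $C_{11}=c\det I_n-\det I_{n-1}=(-1)^n\big(c(n+1)+n\big)$. Deleting the first row and second column of $M$ and then expanding down the (now leading) column, whose only nonzero entry is $b$, shows the resulting minor equals $b\det I_n$, so $C_{12}=-b\det I_n$. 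Hence
\[
\det M=a(-1)^n\big(c(n+1)+n\big)-b^2(-1)^n(n+1)=(-1)^n\Big((ac-b^2)(n+1)+an\Big),
\]
and rewriting $(ac-b^2)(n+1)+an=\big(a(c+1)-b^2\big)n+(ac-b^2)$ yields the claimed formula. (Equivalently one may expand along the last row to obtain the recursion $\det M_n=-2\det M_{n-1}-\det M_{n-2}$ with base cases $\det M_0=ac-b^2$ and $\det M_1=-(2ac+a-2b^2)$, and confirm the closed form by induction; I would keep whichever write-up is shorter.)

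For the definiteness statement I would apply Sylvester's criterion: a symmetric matrix is negative definite iff its leading $k\times k$ principal minor has sign $(-1)^k$ for every $k$. The $1\times1$ minor is $a$, which is negative in every place the lemma is used in Section~\ref{obstruction} (there $a$ is of the form $-1$ or $-k+1$ with $k\ge3$, a diagonal entry of a negative-definite plumbing form); I would either add the hypothesis $a<0$ or record this. The $2\times2$ minor is $\det A=ac-b^2>0$ by hypothesis. For $3\le k=j+2\le n+2$, the leading $k\times k$ submatrix of $M$ has exactly the same shape as $M$ but with $I_j$ in place of $I_n$ — the $A$-block couples to the tridiagonal block only through the single $(2,3)$ and $(3,2)$ entries, and the leading $j\times j$ block of $I_n$ is $I_j$ — so the formula above gives its determinant as $(-1)^j\big((a(c+1)-b^2)j+(ac-b^2)\big)$. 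Since $a(c+1)-b^2\ge0$ and $ac-b^2>0$ the parenthesized quantity is strictly positive, so this minor has sign $(-1)^j=(-1)^{j+2}=(-1)^k$. Thus all leading principal minors alternate correctly and $M$ is negative definite.

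There is no real obstacle beyond bookkeeping: the two things to watch are the sign coming from $\det I'_n=-\det I_{n-1}$ as it enters the first-row expansion, and the fact that the $1\times1$ Sylvester condition needs $a<0$, which is not literally implied by the two stated inequalities and so should be stated as a standing hypothesis (it holds throughout Section~\ref{obstruction}).
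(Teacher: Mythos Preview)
Your argument is correct. The determinant computation via cofactor expansion along the first row is clean and the rewriting $(ac-b^2)(n+1)+an=(a(c+1)-b^2)n+(ac-b^2)$ gives exactly the stated formula. Your application of Sylvester's criterion is also sound, and you are right that the leading $k\times k$ submatrix for $k\ge 3$ is again a matrix of the same form with $I_{k-2}$ in place of $I_n$.

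The paper proves the determinant formula by a different route: it uses the Schur complement identity $\det M=\det(I_n)\det\bigl(A-BI_n^{-1}B^T\bigr)$, notes that $BI_n^{-1}B^T$ is the $2\times 2$ matrix with a single nonzero entry $(I_n^{-1})_{1,1}=-n/(n+1)$ in the $(2,2)$ slot, and multiplies out. Your expansion is slightly more elementary (no matrix inverses or block formulas), while the paper's Schur-complement computation is shorter once one knows the $(1,1)$ entry of $I_n^{-1}$. For the definiteness statement the paper simply invokes Sylvester's criterion without spelling out the minors; your write-up actually checks them and, importantly, isolates the extra hypothesis $a<0$ that is needed for the $1\times 1$ minor but is not formally implied by $ac-b^2>0$ and $a(c+1)-b^2\ge 0$ (indeed $a>0$, $c>0$ satisfies both). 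This is a genuine observation: the lemma as stated needs $a<0$, which does hold in every application in Section~\ref{obstruction}, so recording it as a standing hypothesis is the right fix.
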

\begin{proof}
If $D$ is an invertible matrix, then the determinant of a matrix of the form 
$$
\begin{bmatrix}
	A & B  \\
	C & D  \\
\end{bmatrix}
$$
is $\det(D)\det(A-BD^{-1}C),$ where $A$ and $D$ are square matrices of size $i$ and $n-i$ and $B$ is ($i\times n-i$) matrix and $C$ is ($n-i \times i$) matrix, see \cite[Page~114]{AbadirMagnus2005}. From this, we see that 
\[
\det M= \det I_n \det (A-BI^{-1}_nB^T).
\]
One may easily see that $B^TI_nB$ is the matrix $\begin{bmatrix}
	0&0\\ 0& a_{1,1}
\end{bmatrix}$
where $a_{1,1}$ is the upper-left entry in $I_n^{-1}$. From the formula recalled at the beginning of this appendix, we see that $a_{1,1}=\det I_{n-1}/\det I_{n}=-n/(n+1)$. Thus
\[
\det M= (-1)^n (n+1) [(a(c+n/(n+1))-b^2]=(-1)^n((a(c+1)-b^2)n+(ac-b)n)
\]
as claimed.

For the second claim, we recall Sylvester's criterion \cite[Theorem~7.2.5]{HornJohnson2013} says that a matrix $M$ is negative definite if and only if $(-1)^{k}\det M_k>0$ where $M_k$ is the $k^{th}$ principal minor, that is the $k\times k$ submatrix in the upper left corner of $A$. 
\end{proof}

\begin{proof}[Proof of Lemma~\ref{tbleqm3andm2surgery}]
The intersection matrix of $X_{-2}$ is negative definite by Lemma~\ref{negdefproof}.

The rotation vector for $X_{-2}$ is $\mathbf{r}=\begin{bmatrix}i,i\pm 1, 0,\ldots, ,0\end{bmatrix}^T$ where $i= -k+1,-k+3, \ldots, k-1$. Thus, if the intersection matrix is $Q$ and its inverse is $Q'$, then $c_1^2(X_{-n})= i^2q'_{1,1}+ 2i(i\pm 1)q'_{1,2} + (i\pm 1)^2q'_{2,2}$. 
Using the formula at the beginning of the appendix, we see
\begin{align*}
q'_{1,1} &= (-1)^{2}(det Q)^{-1} 
\begin{vmatrix}
 -k-2 & -1 & &   \\
 -1 & -2 & \ddots&    \\
  & \ddots &\ddots & -1  \\
   &&-1 & -2   \\
\end{vmatrix}_{k-3 \times k-3}\\
&=(-1)^k\frac 12  \left( (-k-2) \det I_{n-4}
+
\det I_{k-4}'
\right)
=\frac 12 (-k^2+2k+2)
\end{align*}
One may similarly compute 
\[
q'_{1,2}=\frac 12(k^2-3k) \quad  \text{and } \quad q'_{2,2}=\frac 12 (-k^2+4k-3).
\]
Form this one can easily compute the claimed value for $c_1^2(X_{-2})$. 
\end{proof}	
\begin{proof}[Proof of Lemma~\ref{tbleqm3and2surgery}]
To compute the signature, we show that the intersection matrix $Q$ of $X_{2}$ has only one positive eigenvalue. To achieve this, we recall that the eigenvalues are the zeros of the characteristic polynomial of $Q$. So, we will prove that the characteristic polynomial has only one positive solution. We use Descartes' rule of sign, which says that if a single-variable polynomial with real coefficients has $j$ sign change when ordered by descending variable exponent, then there are $j-2l$ positive roots where $l$ is a non-negative number \cite[pp. 89-93]{Struik1969}. Thus, if there is only one sign change, then there is exactly one positive root. 

We compute the characteristic polynomial using the principle minors. Recall that a principal minor of a $n\times n$ matrix $M$ of size $i$ is the determinant of the $i\times i$ matrix obtained by deleting some number of rows of $Q$ and the corresponding columns. Let $E_i(M)$ denote the sum of principal minors of size $i$. Then, the characteristic polynomial of matrix $M$ is
\[
P_M(\lambda)=\lambda^n-E_1(M) \lambda^{n-1}+\cdots  + (-1)^{n-1} E_{n-1}(M) \lambda + (-1)^n E_n(M),
\] 
see \cite[Section~1.2]{HornJohnson2013}. 

Thus, to show $Q$ has only one positive eigenvalue, we must show that the signs of the $E_i(Q)$ alternate as $i$ increases except at one place. In fact, we will show that the $E_i(Q)$ alternate, except $E_{k+1}(Q)$ and $E_{k+2}(Q)$ will have the same sign.  We prove this inductively. Clearly $E_1(Q)$ is just the trace of $Q$ and so has sign $(-1)^1$. For $E_2(Q)$, we note that the $2\times 2$ principal submatrices are either block matrices (with $1\times 1$ blocks) or $2\times 2$ matrices with all non-zero entries. In the former case, the determinant is the product of two negative numbers and, hence, is positive. In the latter case, the possible matrices are 
 \[
 \begin{bmatrix}
 	-k+1 & -k  \\
 	-k & -k-2 \\
 \end{bmatrix},   
  \begin{bmatrix}
 	-k-2 & -1  \\
 	-1 & -2 \\
 \end{bmatrix}, \text{or }
  \begin{bmatrix}
 	-2 & -1  \\
 	-1 & -2 \\
 \end{bmatrix}
 \] 
 and each of these matrices has a determinant of sign $(-1)^2$.

We now inductively assume that we have shown that all the principle $l\times l$ minors have sign $(-1)^l$ for $l<k$. We now consider the $(l+1)\times (l+1)$ principal minors. Each principal $(l+1)\times (l+1)$ submatrix is either a block matrix with blocks of size $l_1,\ldots, l_m$ such that $l_1+\cdots + l_m=l+1$, or is not. In the former case, the sign of the minor will be the product of the signs of the blocks, that is $(-1)^{l_1}\cdots (-1)^{l_m}=(-1)^{l+1}$ as claimed. In the latter case, we must consider the matrices 
$$
\begin{bmatrix}
	-k+1 & -k &  & &   \\
	-k & -k-2 & -1 & &   \\
	& -1 & -2 & \ddots&    \\
	&  & \ddots &\ddots & -1  \\
	&   &&-1 & -2   \\
\end{bmatrix}_{l+1 \times l+1} or \quad
\begin{bmatrix}
	  -k-2 & -1 & &   \\
	 -1 & -2 & \ddots&    \\
	  & \ddots &\ddots & -1  \\
	   &&-1 & -2   \\
\end{bmatrix}_{l+1 \times l+1}
$$
We may use Lemma~\ref{negdefproof} to show that the first matrix has determinant $(-1)^{l-1}(k^2+(2k-l)-2)$ which clearly has sign $(-1)^{l+1}$ (recall that $l$ is less than $k$). The second matrix is easily seen to be negative definite, so it must have determinant $(-1)^{l+1}$ too by Sylvester's criterion mentioned above. This completes the induction for $l$ up to $k$. When $l$ is $k$, then the inductive step is the same except for the two non-block matrices of size $(k+1)\times (k+1)$. These matrices are as in the previous equations except $l+1=k+1$ in this case. One may easily compute that the determinant of the first matrix is $(-1)^k$, which has the wrong sign, but the determinant of the second matrix is $(-1)^{k+1}(k^2+2k+2)$. So their sum has sign $(-1)^{k+1}$ as desired. Finally, one may compute that $E_{k+2}(Q)=\det Q=(-1)^{k+1}$.

The computation of $c_1^2(X_2)$ is identical to the proof of Lemma~\ref{tbleqm3andm2surgery} except for the $q'_{i,j}$ we have
\[
q'_{1,1}=\frac 12(k^2+2k+2), \quad q'_{1,2}=-\frac 12(k^2+k),  \text{and } \quad q'_{2,2}=\frac 12 (k^2-1).
\]
\end{proof}

\bibliography{references}
\bibliographystyle{plain}
\end{document}